\bmdefine{\NNN}{N}
\bmdefine{\ZZZ}{Z}
\bmdefine{\RRR}{R}
\bmdefine{\sss}{s}
\newcommand{\RRRRR}{{\mathcal R}}
\newcommand{\TTTTT}{{\mathcal T}}
\newcommand{\covers}{\mathrel{\cdot\!\!\!>}}
\newcommand{\covered}{\mathrel{<\!\!\!\cdot}}
\newcommand{\define}{\mathrel{:=}}
\newcommand{\definebycond}{\stackrel{\mathrm{def}}{\iff}}
\newcommand{\gor}{Gorenstein}
\newcommand{\cm}{Cohen-Macaulay}
\newcommand{\type}{{\rm type}}
\newcommand{\qed}{\nolinebreak\rule{.3em}{.6em}}
\newcommand{\joinirred}{join-irreducible}
\newcommand{\rank}{\mathrm{rank}}
\newcommand{\join}{\vee}
\newcommand{\relint}{{\rm{relint}}}
\newcommand{\condn}{{condition N}}
\newcommand{\scn}{{sequence with condition N}}
\newcommand{\sscn}{{sequences with condition N}}
\newcommand{\shseq}{{shortest sequence}}
\renewcommand{\shseq}{{irredundant sequence}}
\newcommand{\irseq}{{irredundant sequence}}
\newcommand{\rmax}{{r_{\max}}}
\newcommand{\mudown}{{\mu^{\downarrow}}}
\newcommand{\muup}{{\mu^{\uparrow}}}
\newcommand{\nudown}{{\nu^{\downarrow}}}
\newcommand{\nuup}{{\nu^{\uparrow}}}
\newtheorem{thm}{Theorem}[section]
\newtheorem{example}[thm]{Example}
\newtheorem{lemma}[thm]{Lemma}
\newtheorem{cor}[thm]{Corollary}
\newtheorem{definition}[thm]{Definition}
\newtheorem{prop}[thm]{Proposition}
\newtheorem{nndefinition}{Definition}
\newtheorem{nnremark}[nndefinition]{Remark}
\newcommand{\bigzerou}{\smash{\lower1.7ex\hbox{\bg 0}}}
\newcommand{\bigastu}{\smash{\lower1.7ex\hbox{\bg *}}}
\newcommand{\refeq}[1]{(\ref{#1})}
\numberwithin{equation}{section}
\newcommand{\mylabel}[1]{{\label{#1}\tt [#1]}}
\let\mylabel=\label
\title{%
On the generators of the canonical module of a Hibi ring:
a criterion of level property 
and the degrees of 
generators}
\author{Mitsuhiro MIYAZAKI\footnote{%
The author is supported partially by 
JSPS KAKENHI Grant Number 15K04818.}%
}
\date{Department of Mathematics, Kyoto University of Education,
1 Fujinomori, Fukakusa, Fushimi-ku, Kyoto, 612-8522, Japan}
\begin{document}


\maketitle

\sloppy

\begin{abstract}
In this paper, we study the minimal generating system of the canonical module of a Hibi
ring.
Using the results, we state a characterization of a Hibi ring to be level.
We also give a characterization of a Hibi ring to be of type 2.
Further, we show that the degrees of the elements of the minimal generating system
of the canonical module of a Hibi ring form a set of consecutive integers.
\\
Hibi ring, canonical module, level ring, \cm\ type
\\
MSC:
13F50, 13H10, 06A07
\end{abstract}

\section{Introduction}

A Hibi ring is an algebra with straightening law 
on a distributive lattice with many good properties.
It appears in many scenes of the study of 
rings with combinatorial structure.
In many important cases, the initial subalgebra of a subalgebra of a polynomial ring
has a structure of a Hibi ring.
A Hibi ring is a normal affine semigroup ring and is therefore
\cm\ by the result of Hochster \cite{hoc}.
Moreover there is a good description of the canonical module of it \cite{hib}.
However, as the Example of \cite[\S 1 e)]{hib} shows, a Hibi ring is
not necessarily level.

The present author gave a sufficient condition for a Hibi ring
to be level \cite{miy} and showed that the homogeneous coordinate ring
of a Schubert subvariety of a Grassmannian is level.
However, the sufficient condition given in \cite{miy} is far from necessary.


In this paper we analyze the generating system of the 
canonical module of a Hibi ring by using the description
of the canonical module of a normal affine semigroup ring by Stanley
\cite{sta2}.
Using this, we give a combinatorial
necessary and sufficient condition for a Hibi ring to be level.
We also show that the set of degrees of the 
generators of the canonical module
of a Hibi ring form a set of consecutive integers.
Here we call a member of the minimal generating system a generator for simplicity.

Further, we give a combinatorial criterion that the Hibi ring under consideration has \cm\ type 2.
Recall that a Hibi ring is \gor\ i.e., type of it is 1 if and only if the set of \joinirred\
elements of the distributive lattice defining that Hibi ring is pure \cite[\S3 d)]{hib}.
Since type 2 is next to type 1, 
we think it worth to give a criterion of a Hibi ring has type 2.

This paper is organized as follows.
Section \ref{sec:pre}  is a preparation for the main argument.
In Section \ref{sec:pre}, we recall some basic facts on Hibi rings, normal affine semigroup rings
and levelness of standard graded algebras.
Also,
we state some basic facts on posets.

In Section \ref{sec:level}, we state a necessary and sufficient condition
that a Hibi ring is level by considering the degrees of the generators of the
canonical module of the Hibi ring.
The key notion is a sequence with ``\condn'' (see Definition \ref{def:condn}).
We named the condition as \condn\ from the shape of the Hasse diagram  related to the
elements in the sequence.
We show that from a generator of the canonical module of a Hibi ring, one can 
construct a \scn\ 
with additional condition
in the poset of \joinirred\ elements corresponding to the Hibi ring.
We also show a kind of converse of this fact.
Thus, the range of the degrees of the generators of the canonical module of a Hibi ring can be 
described by \sscn.
Using this, we obtain a combinatorial criterion for a Hibi ring to be level.

Further, we show that the degrees of the elements of the minimal generating system of the
canonical module of a Hibi ring form a consecutive integers,
i.e.,
if there are elements of degrees $d_1$ and $d_2$ in the minimal system of generators of the
canonical module of a Hibi ring with $d_1< d_2$,
 then there exists an element with degree $d$ in the
minimal system of the generators of the canonical module of the Hibi ring for any 
integer $d$ with $d_1\leq d\leq d_2$.

In Sections \ref{sec:level type 2} and \ref{sec:non level type 2}, 
we state a combinatorial criterion that a Hibi ring
is of type 2.
In Section \ref{sec:level type 2}, we state a necessary and sufficient
condition that a Hibi ring is level and has type 2 and in Section \ref{sec:non level type 2}, 
we state a necessary and sufficient condition that a Hibi ring is not level and has type 2.


\section{Preliminaries}
\mylabel{sec:pre}

In this paper, all rings and algebras are assumed to be commutative with
identity element.
We denote by $\NNN$ the set of non-negative integers, by
$\ZZZ$ the set of integers, by
$\RRR$ the set of real numbers and
by $\RRR_{\geq0}$ the set of non-negative real numbers.

First we recall some definitions concerning finite partially
ordered sets (poset for short).
\begin{nndefinition}\rm
Let $Q$ be a finite poset.
\begin{itemize}
\item
A chain in $Q$ is a totally ordered subset of $Q$.
\item
For a chain $X$ in $Q$, we define the length of $X$ as $\#X-1$,
where $\#X$ denotes the cardinality of $X$.
\item
The maximum length of chains in $Q$ is called the rank of $Q$ and denoted as $\rank Q$.
\item
If every maximal chain of $Q$ has the same length, we say that $Q$ is pure.
\item
If $x$, $y\in Q$, $x<y$ and there is no $z\in Q$ with $x<z<y$,
we say that $y$ covers $x$ and denote 
$x\covered y$ or $y\covers x$.
\item
For $x$, $y\in Q$ with $x\leq y$, we set
$[x,y]_Q\define\{z\in Q\mid x\leq z\leq y\}$,
 and for $x$, $y\in Q$ with $x<y$, we set
$[x,y)_Q\define\{z\in Q\mid x\leq z< y\}$, 
$(x,y]_Q\define\{z\in Q\mid x< z\leq y\}$ and
$(x,y)_Q\define\{z\in Q\mid x< z< y\}$.
We denote $[x,y]_Q$ ($[x,y)_Q$, $(x,y]_Q$ $(x,y)_Q$ resp.) as $[x,y]$ ($[x,y)$, $(x,y]$, $(x,y)$
 resp.)
if there is no fear of confusion.
\item
Let $\infty$ be a new element which is not contained in $Q$.
We set $Q^+\define Q\cup\{\infty\}$ with the order
$x<\infty$ for any $x\in Q$.
\item
If $I\subset Q$ and
$x\in I$, $y\in Q$, $y\leq x\Rightarrow y\in I$,
then we say that $I$ is a poset ideal of $Q$.
\end{itemize}
\end{nndefinition}

Next we recall the definition and basic facts of Hibi rings.
Let 
$H$ be a finite distributive lattice and
$x_0$ the unique minimal element of $H$.
Let
$P$ be the set of \joinirred\ elements of $H$,
i.e., $P=\{\alpha\in H\mid \alpha=\beta\join\gamma\Rightarrow \alpha=\beta$ or $\alpha=\gamma\}$.
Note that we treat $x_0$ as a \joinirred\ element of $H$.

Then it is known that $H$ is isomorphic to $J(P)\setminus\{\emptyset\}$, 
ordered by inclusion, by the correspondence
\[
\begin{array}{ll}
\alpha\mapsto
	\{x\in P\mid x\leq\alpha\text{ in $H$}\}&\text{for $\alpha\in H$ and}
\\
I\mapsto
	\displaystyle\bigvee_{x\in I}x&\text{for $I\in J(P)\setminus\{\emptyset\}$,}
\end{array}
\]
where $J(P)$ is the set of poset ideals of $P$.
In particular, if $P$ is a poset with unique minimal element,
then there is a finite distributive lattice whose set of \joinirred\
elements is $P$.

Let $\{T_x\}_{x\in P}$ be a family of indeterminates indexed by $P$
and $K$ a field.
\begin{nndefinition}[\cite{hib}]
\rm
$\RRRRR_K(H)\define K[\prod_{x\leq \alpha}T_x\mid\alpha\in H]$.
\end{nndefinition}
$\RRRRR_K(H)$ is called the Hibi ring over $K$ on $H$.
We set $\deg T_{x_0}=1$ and $\deg T_{x}=0$ for any $x\in P\setminus\{x_0\}$.
Then, $\RRRRR_K(H)$ is a standard graded $K$-algebra, i.e.,
a non-negatively graded $K$-algebra whose degree $0$ part is $K$ and is generated over $K$
by the degree $1$ elements.
\begin{nndefinition}[\cite{hib}]\rm
For a map $\nu\colon P\to\NNN$, we set $T^\nu\define\prod_{x\in P}T_x^{\nu(x)}$.
We set
$\overline\TTTTT(P)\define\{\nu\colon P\to\NNN\mid x\leq y\Rightarrow \nu(x)\geq\nu(y)\}$
and
$\TTTTT(P)\define\{\nu\colon P\to\NNN\mid \nu(z)>0$ for any $z\in P$ and
$x< y\Rightarrow \nu(x)>\nu(y)\}$.
For $\nu\in\overline\TTTTT(P)$, we extend $\nu$ as a map from $P^+$ to $\NNN$ 
by setting $\nu(\infty)=0$.
\end{nndefinition}
With this notation,
\begin{thm}[\cite{hib}]
$\RRRRR_K(H)=\bigoplus_{\nu\in\overline\TTTTT(P)} KT^\nu$.
In particular, 
by the result of Hochster \cite{hoc},
$\RRRRR_K(H)$ is a normal affine semigroup ring and is \cm.
\end{thm}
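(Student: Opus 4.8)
The plan is to identify the monomials that span $\RRRRR_K(H)$ as a $K$-vector space and to check that they are exactly the $T^\nu$ with $\nu\in\overline\TTTTT(P)$. By definition $\RRRRR_K(H)$ is the $K$-subalgebra of the polynomial ring $K[T_x\mid x\in P]$ generated by the monomials $u_\alpha\define\prod_{x\leq\alpha}T_x$ for $\alpha\in H$. A subalgebra generated by monomials is spanned over $K$ by the products of its generators, and each such product is again a monomial; so it suffices to determine which exponent functions $\nu\colon P\to\NNN$ arise from products $\prod_j u_{\alpha_j}$. Under the isomorphism $H\cong J(P)\setminus\{\emptyset\}$ the element $\alpha$ corresponds to the nonempty poset ideal $I_\alpha\define\{x\in P\mid x\leq\alpha\}$, and $u_\alpha=T^{\chi_{I_\alpha}}$, where $\chi_I$ denotes the characteristic function of $I$. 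Thus I must show that the set of finite sums $\sum_j\chi_{I_j}$ of characteristic functions of nonempty poset ideals coincides with $\overline\TTTTT(P)$.

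One inclusion is immediate: if $I$ is a poset ideal and $x\leq y$, then $y\in I$ forces $x\in I$, so $\chi_I(x)\geq\chi_I(y)$; hence each $\chi_I$, and therefore any finite sum of such functions, lies in $\overline\TTTTT(P)$. For the reverse inclusion I use the level-set decomposition. Given $\nu\in\overline\TTTTT(P)$, put $r\define\max_{x\in P}\nu(x)$ and, for $1\leq t\leq r$, set $I_t\define\{x\in P\mid\nu(x)\geq t\}$. Because $\nu$ is order-reversing, $x\leq y$ together with $y\in I_t$ gives $\nu(x)\geq\nu(y)\geq t$, so $x\in I_t$; thus each $I_t$ is a poset ideal, and it is nonempty for $1\leq t\leq r$. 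Counting level sets yields $\nu(x)=\#\{t\mid 1\leq t\leq r,\ \nu(x)\geq t\}$, i.e.\ $\nu=\sum_{t=1}^r\chi_{I_t}$, so $T^\nu=\prod_{t=1}^r u_{\alpha_t}\in\RRRRR_K(H)$ (the empty product $1=T^0$ covering the case $\nu\equiv0$). Since distinct monomials of $K[T_x\mid x\in P]$ are $K$-linearly independent, the $T^\nu$ with $\nu\in\overline\TTTTT(P)$ form a $K$-basis, which is the asserted decomposition.

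For the ``in particular'' clause, the multiplication rule $T^\nu T^{\nu'}=T^{\nu+\nu'}$ identifies $\RRRRR_K(H)$ with the semigroup ring of the additive semigroup $\overline\TTTTT(P)\subseteq\NNN^P\subseteq\ZZZ^P$. This semigroup is precisely the set of lattice points of the rational polyhedral cone $C\define\{\nu\in\RRR_{\geq0}^P\mid\nu(x)\geq\nu(y)\text{ whenever }x\leq y\}$; by Gordan's lemma $C\cap\ZZZ^P$ is finitely generated, so $\RRRRR_K(H)$ is an affine semigroup ring. Normality follows from the fact that $C$ is a cone, so $n\nu\in C\iff\nu\in C$: the group generated by $\overline\TTTTT(P)$ is all of $\ZZZ^P$ (each standard vector $e_x$ is the difference $\chi_{\langle x\rangle}-\chi_{\langle x\rangle\setminus\{x\}}$ of ideal characteristic functions, $x$ being maximal in its principal ideal $\langle x\rangle$), and any $\nu\in\ZZZ^P$ with $n\nu\in\overline\TTTTT(P)$ for some $n\geq1$ already lies in $C\cap\ZZZ^P=\overline\TTTTT(P)$. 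Hence $\RRRRR_K(H)$ is a normal affine semigroup ring, and Hochster's theorem \cite{hoc} gives that it is \cm.

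The routine parts are the two inclusions and the cone/normality bookkeeping; the only step carrying real content is the level-set decomposition $\nu=\sum_t\chi_{I_t}$, where the order-reversing hypothesis is exactly what makes every level set a poset ideal. I expect no serious obstacle beyond keeping the identification $H\cong J(P)\setminus\{\emptyset\}$ and the empty-ideal/empty-product edge cases straight.
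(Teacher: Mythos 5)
Your proof is correct. Note that the paper states this theorem only as a recalled result, citing Hibi \cite{hib}, and supplies no proof of its own, so there is no internal argument to compare against; your route --- reducing to the question of which exponent vectors are sums of characteristic functions of nonempty poset ideals, settling the nontrivial inclusion via the level-set decomposition $\nu=\sum_{t=1}^{r}\chi_{I_t}$, and then checking normality of the semigroup $C\cap\ZZZ^{P}$ directly (group generated equals $\ZZZ^{P}$, saturation from the cone property) before invoking Hochster --- is the standard one, and you have handled the relevant edge cases (nonemptiness of each $I_t$ for $t\le r$, the empty product for $\nu\equiv 0$, and the case $x=x_0$ in the generation of $\ZZZ^{P}$) correctly.
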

Note that $\deg T^\nu=\nu(x_0)$.

Here we recall the description of the canonical module of a 
normal affine semigroup ring by Stanley.
\begin{thm}[{\cite[p.\ 81]{sta2}}]
Let $S$ be a finitely generated additive  submonoid of $\NNN^n$
and $X_1$, \ldots, $X_n$ indeterminates.
If the affine semigroup ring $\bigoplus_{s\in S}KX^s$ 
in $K[X_1, \ldots, X_n]$
is normal, then
the canonical module of $\bigoplus_{s\in S}KX^s$ is
$\bigoplus_{s\in S\cap \relint \RRR_{\geq0}S}KX^s$,
where $\relint Q$ denotes the interior of $Q$ in the affine space spanned by $Q$.
\end{thm}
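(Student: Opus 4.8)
Although this statement is quoted from \cite{sta2}, let me indicate the proof I would give. Write $C\define\RRR_{\geq0}S$ for the cone generated by $S$ and $L\define\ZZZ S$ for the group it generates, so that normality of the semigroup ring is exactly the identity $S=C\cap L$. Since $C$ is a rational polyhedral cone I would fix primitive integral linear functionals $\varphi_1,\ldots,\varphi_m$ on $L$ cutting out its facets, so that $C=\{a\mid \varphi_i(a)\geq0\ \text{for all }i\}$ and, because each $\varphi_i(a)$ is an integer, $\relint C\cap L=\{a\in L\mid \varphi_i(a)\geq1\ \text{for all }i\}$. Using $S=C\cap L$ one has $S\cap\relint C=\relint C\cap L$, so the goal is to identify the canonical module with the fractional ideal $\Omega\define\bigoplus_{a\in\relint C\cap L}KX^a$ spanned by the monomials whose exponent vectors lie strictly inside $C$.

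The plan is to work throughout in the $L$-graded (equivalently $\ZZZ^n$-graded) category and to invoke graded local duality. Since $R\define\bigoplus_{s\in S}KX^s$ is \cm\ of dimension $d=\dim C$ by Hochster's result, its canonical module is the graded $K$-dual of the top local cohomology module $H^d_{\mathfrak m}(R)$, where $\mathfrak m$ is the ideal generated by the monomials $X^s$ with $s\neq0$; in particular $\dim_K(\omega_R)_a=\dim_K H^d_{\mathfrak m}(R)_{-a}$ for every $a\in L$. The heart of the matter is thus the multigraded computation of $H^d_{\mathfrak m}(R)$.

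To carry this out I would resolve $H^\bullet_{\mathfrak m}(R)$ by the Ishida (co)complex assembled from the localizations of $R$ at the faces of $C$, equivalently by a \v{C}ech-type complex indexed by those faces. Its strand in multidegree $a$ is a complex of $K$-vector spaces whose cohomology is governed by the topology of the portion of the cone singled out by the sign pattern of $a$; a standard computation then shows that $H^d_{\mathfrak m}(R)_a$ is one-dimensional when $-a\in\relint C$ (so that $\varphi_i(-a)\geq1$ for all $i$) and vanishes otherwise. Dualizing degree by degree yields $\dim_K(\omega_R)_a=1$ exactly for $a\in\relint C\cap L$ and $0$ elsewhere, which is the assertion. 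I expect the main obstacle to be precisely this local cohomology computation: keeping the sign conventions consistent and proving that the relevant relative cohomology of the cross-section of $C$ is concentrated in top degree and detects exactly the strictly interior lattice points.

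As a cross-check I would also run the divisor-theoretic argument. Being a normal \cm\ domain, $R$ has a canonical module realized by a divisorial ideal whose class is the canonical class. The torus-invariant height-one primes of $R$ correspond to the facets of $C$, with associated discrete valuations $v_i(X^a)=\varphi_i(a)$, and for a monomial divisor $\sum_i c_iD_i$ the corresponding divisorial ideal is $\{f\mid v_i(f)\geq -c_i\ \forall i\}$, i.e.\ $\bigoplus_{\varphi_i(a)\geq -c_i}KX^a$. It then suffices to show that the canonical class equals $-\sum_iD_i$, for then $\omega_R=\bigoplus_{\varphi_i(a)\geq1}KX^a=\Omega$. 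Establishing that the canonical divisor is minus the sum of the torus-invariant prime divisors is the crux of this second route, and it is ultimately the same interior-point computation seen from the dual side.
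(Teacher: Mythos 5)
This statement is not proved in the paper at all: it is quoted verbatim from Stanley \cite[p.~81]{sta2} and used as a black box, so there is no internal argument to compare yours against. Taken on its own terms, your outline is the standard modern proof and its skeleton is sound: normality of $\bigoplus_{s\in S}KX^s$ for a finitely generated $S$ is indeed equivalent to $S=\RRR_{\geq0}S\cap\ZZZ S$, the identification $S\cap\relint\RRR_{\geq0}S=\{a\in\ZZZ S\mid\varphi_i(a)\geq1\ \forall i\}$ is correct because the $\varphi_i$ are integral, and both routes you sketch (graded local duality against $H^d_{\mathfrak m}(R)$ computed by the Ishida complex, and the toric canonical divisor formula $K=-\sum_iD_i$) are legitimate and standard; they appear, for instance, in Bruns--Herzog's treatment of dualizing complexes of semigroup rings. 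What you have written is, however, a plan rather than a proof: the entire mathematical content sits in the step you yourself flag as ``the heart of the matter,'' namely that $H^d_{\mathfrak m}(R)_a$ is one-dimensional for $-a\in\relint C\cap L$ and zero otherwise, and you do not carry out the cross-sectional topology argument (nor, in the divisorial variant, the proof that the canonical class is $-\sum_iD_i$, which you correctly observe is the same computation in disguise). You should also make explicit that the $L$-graded Matlis dual of $H^d_{\mathfrak m}(R)$ recovers the canonical module \emph{as an $R$-module}, not merely its multigraded Hilbert function, so that the answer is the specific ideal $\Omega$ and not just an abstract module with the right dimensions; the Ishida-complex computation does furnish this, but it needs to be said. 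Since the paper only needs the statement as a citation, none of this affects the paper, but as a standalone proof your text would need those two computations filled in.
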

By this result, the canonical module of a normal affine semigroup ring has the unique
minimal fine graded generating system up to non-zero scalar multiplication.
Therefore, we call a member of the minimal fine graded generating system
a generator for simplicity.

By applying this theorem to $\RRRRR_K(H)$, we see the following
\begin{cor}
The canonical module of $\RRRRR_K(H)$ is
$\bigoplus_{\nu\in\TTTTT(P)}KT^\nu$.
\end{cor}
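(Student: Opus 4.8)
The plan is to apply Stanley's theorem (quoted above from \cite{sta2}) to the normal affine semigroup $S\define\overline\TTTTT(P)\subseteq\NNN^P$, where the coordinates of $\NNN^P$ are indexed by the elements of $P$ so that the monomial $T^\nu$ corresponds to the lattice point $\nu$. By the preceding theorem $\RRRRR_K(H)=\bigoplus_{\nu\in S}KT^\nu$, and this ring is normal; hence the canonical module equals $\bigoplus_{\nu\in S\cap\relint\RRR_{\geq0}S}KT^\nu$, and it suffices to prove
\[
S\cap\relint\RRR_{\geq0}S=\TTTTT(P).
\]

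First I would identify the real cone $\RRR_{\geq0}S$. Since $S$ is exactly the set of integer points $\nu\colon P\to\NNN$ with $\nu(x)\geq\nu(y)$ whenever $x\leq y$, and these inequalities have integer coefficients, one has
\[
\RRR_{\geq0}S=C\define\{\nu\colon P\to\RRR\mid\nu(x)\geq0\ (x\in P)\text{ and }\nu(x)\geq\nu(y)\text{ whenever }x\leq y\},
\]
with $S=C\cap\ZZZ^P$. Indeed $\RRR_{\geq0}S\subseteq C$ is clear, while $C$ is a rational polyhedral cone and is therefore the nonnegative hull of its lattice points, which are precisely the elements of $S$. Adopting the convention $\nu(\infty)=0$, the nonnegativity constraints $\nu(x)\geq0$ are the instances $\nu(x)\geq\nu(\infty)$, so $C$ is cut out by the inequalities $\nu(x)\geq\nu(y)$ with $x\leq y$ in $P^+$. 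Moreover $C$ is full-dimensional: the function $z\mapsto\coht z+1$, with $\coht z$ the length of a longest chain of $P$ from $z$ up to a maximal element, is strictly order-reversing and positive, hence lies in the topological interior $\ini C$; in particular $\relint\RRR_{\geq0}S=\ini C$.

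Next I would compute $\ini C$. For a full-dimensional cone the interior is the locus where every defining inequality holds strictly, so
\[
\ini C=\{\nu\colon P\to\RRR\mid\nu(x)>\nu(y)\text{ whenever }x<y\text{ in }P^+\}.
\]
The equality $S\cap\ini C=\TTTTT(P)$ is then immediate. If $\nu\in S\cap\ini C$, then $\nu$ is an $\NNN$-valued order-reversing map, the relations $x<y$ in $P$ give $\nu(x)>\nu(y)$, and the relations $x<\infty$ give $\nu(x)>\nu(\infty)=0$, so $\nu\in\TTTTT(P)$. Conversely any $\nu\in\TTTTT(P)$ is order-reversing, hence lies in $S$, and satisfies all the displayed strict inequalities. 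This proves the displayed equality and hence the corollary.

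The only genuinely delicate point is the passage to the interior, that is, the claim that $\ini C$ is exactly the set on which all the relations of $P^+$ are strict. This rests on $C$ being full-dimensional together with the fact that, for a full-dimensional rational cone, an interior point makes every valid nonzero homogeneous inequality strict (writing such an inequality as a nonnegative combination of the facet inequalities); equivalently, one may first check strictness on the covering relations $x\covered y$ of $P^+$ and propagate it along saturated chains. Once this is in hand, everything reduces to the elementary translation between order-reversing lattice points and the defining conditions of $\TTTTT(P)$.
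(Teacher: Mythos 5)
Your proposal is correct and follows exactly the route the paper intends: the paper states this corollary as an immediate application of Stanley's theorem to the semigroup $\overline\TTTTT(P)$, and you have simply filled in the routine polyhedral details (identifying $\RRR_{\geq0}\overline\TTTTT(P)$ with the order cone, checking full-dimensionality, and characterizing the interior by strict inequalities). No discrepancy with the paper's argument.
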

In order to describe the generators of the canonical module
of $\RRRRR_K(H)$, we state the following
\begin{nndefinition}\rm
We define the order on $\TTTTT(P)$ by setting
$
\nu\leq\nu'\definebycond\nu'-\nu\in\overline\TTTTT(P)
$
for $\nu$, $\nu'\in\TTTTT(P)$,
where $(\nu'-\nu)(x)\define \nu'(x)-\nu(x)$.
\end{nndefinition}
Then the following fact is easily verified.
\begin{cor}
\mylabel{cor:can gen}
Let $\nu$ be an element of $\TTTTT(P)$.
Then $T^\nu$ is a generator of the canonical module of $\RRRRR_K(H)$
if and only if $\nu$ is a minimal element of $\TTTTT(P)$.
\end{cor}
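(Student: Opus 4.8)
The plan is to unwind the notion of a generator in the fine-graded setting and match it with the order on $\TTTTT(P)$. By the preceding corollary the canonical module $\omega$ of $\RRRRR_K(H)$ equals $\bigoplus_{\nu\in\TTTTT(P)}KT^\nu$, and it is a module over $\RRRRR_K(H)=\bigoplus_{\mu\in\overline\TTTTT(P)}KT^\mu$ which is finely graded by the semigroup $\overline\TTTTT(P)$. Let $\mathfrak{m}\define\bigoplus_{\mu\in\overline\TTTTT(P)\setminus\{0\}}KT^\mu$ be the irrelevant maximal ideal. Since the monomials $T^\nu$ ($\nu\in\TTTTT(P)$) form a homogeneous $K$-basis of $\omega$ and each fine-graded component is at most one-dimensional, the minimal fine-graded generating system consists exactly of those $T^\nu$ whose image in $\omega/\mathfrak{m}\omega$ is nonzero; that is, $T^\nu$ is a generator if and only if $T^\nu\notin\mathfrak{m}\omega$.

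Next I would make the containment $T^\nu\in\mathfrak{m}\omega$ explicit. Because $\RRRRR_K(H)$ is a monoid algebra, $\mathfrak{m}\omega$ is spanned over $K$ by the monomials $T^{\mu+\nu'}$ with $\mu\in\overline\TTTTT(P)\setminus\{0\}$ and $\nu'\in\TTTTT(P)$, and multiplication adds fine degrees. As distinct monomials lie in distinct fine-graded components, $T^\nu\in\mathfrak{m}\omega$ holds precisely when $\nu=\mu+\nu'$ for some such $\mu$ and $\nu'$.

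Finally I would translate this decomposition into the order on $\TTTTT(P)$. If $\nu=\mu+\nu'$ with $\mu\in\overline\TTTTT(P)\setminus\{0\}$ and $\nu'\in\TTTTT(P)$, then $\nu-\nu'=\mu\in\overline\TTTTT(P)$ and $\nu-\nu'\neq0$, so $\nu'<\nu$ in $\TTTTT(P)$; conversely, given $\nu'\in\TTTTT(P)$ with $\nu'<\nu$, setting $\mu\define\nu-\nu'$ yields $\mu\in\overline\TTTTT(P)\setminus\{0\}$ and $\nu=\mu+\nu'$. Hence $T^\nu\in\mathfrak{m}\omega$ if and only if $\nu$ is not a minimal element of $\TTTTT(P)$, which combined with the first step gives the claim. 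The argument is a chain of equivalences, so there is no serious obstacle; the only point requiring care is the fine-graded Nakayama step, namely that because every fine-graded component of $\omega$ is at most one-dimensional, minimal generation reduces to the purely combinatorial question of whether $\nu$ decomposes as $\mu+\nu'$ above.
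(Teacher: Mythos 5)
Your argument is correct and complete: the fine-graded Nakayama reduction (each fine-graded component of the canonical module is at most one-dimensional, so $T^\nu$ is a minimal generator iff $T^\nu\notin\mathfrak{m}\omega$, iff $\nu$ does not decompose as $\mu+\nu'$ with $\mu\in\overline\TTTTT(P)\setminus\{0\}$ and $\nu'\in\TTTTT(P)$) is exactly the intended verification. The paper states this corollary without proof, dismissing it as ``easily verified,'' and your write-up supplies precisely the standard argument it has in mind.
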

Finally, we recall the following characterization of \gor\ 
property of $\RRRRR_K(H)$ by Hibi.
\begin{thm}[{\cite[\S 3 d)]{hib}}]
\mylabel{thm:gor case}
$\RRRRR_K(H)$ is \gor\ if and only if $P$ is pure.
\end{thm}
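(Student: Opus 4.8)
The plan is to translate the statement into a combinatorial question about $\TTTTT(P)$ and then study its minimal elements directly. Since $\RRRRR_K(H)$ is \cm, it is \gor\ precisely when its canonical module is principal, i.e.\ when the minimal generating system has a single element; by Corollary~\ref{cor:can gen} these generators are the minimal elements of $\TTTTT(P)$, so the theorem becomes: $\TTTTT(P)$ has a unique minimal element if and only if $P$ is pure. For $x\in P$ write $\coht(x)$ for the length of a longest chain from $x$ to a maximal element and $\rho(x)$ for the length of a longest chain from $x_0$ to $x$, and set $r=\rank P$. Two functions will be central: $\nudown(x):=\coht(x)+1$ and $\nuup(x):=r+1-\rho(x)$. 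Both lie in $\TTTTT(P)$ (positivity and strict antitonicity are immediate), and $\nudown\le\nuup$ pointwise because $\rho(x)+\coht(x)\le r$.

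First I would verify that $\nudown$ and $\nuup$ are \emph{always} minimal. For $\nudown$ this is clear: any $\nu\in\TTTTT(P)$ satisfies $\nu(x)\ge\coht(x)+1$ (follow a longest chain upward from $x$ and use strict decrease), so $\nudown$ is the pointwise minimum of $\TTTTT(P)$; as $\nu'\le\nu$ in $\TTTTT(P)$ forces $\nu'\le\nu$ pointwise, nothing lies strictly below it. For $\nuup$ I would use the minimality criterion read off from the definition of the order: since $\overline\TTTTT(P)$ is generated as a monoid by the indicator functions of nonempty poset ideals, $\nu$ fails to be minimal iff there is a nonempty poset ideal $U$ with $\nu(x)\ge 2$ for all $x\in U$ and $\nu(x)\ge\nu(y)+2$ for every covering pair $x\covered y$ with $x\in U$, $y\notin U$. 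If such a $U$ existed for $\nuup$, pick $y^{\ast}\notin U$ with $\rho(y^{\ast})$ minimal (the complement is nonempty, as $\nuup$ equals $1$ at the top of a longest chain); a longest chain to $y^{\ast}$ gives a cover $w\covered y^{\ast}$ with $\rho(w)=\rho(y^{\ast})-1$, so $w\in U$ (its rank is too small to be in the complement), and then $\nuup(w)=\nuup(y^{\ast})+1$ violates the gap condition. Hence $\nuup$ is minimal as well.

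For $P$ pure $\Rightarrow$ uniqueness I would first note that purity forces $P$ to be graded: two saturated chains from $x_0$ to a common $x$, each completed by one fixed saturated chain from $x$ to a maximal element, become maximal chains and hence have equal length, so $\rho$ is a genuine rank function and $\rho(x)+\coht(x)=r$ for all $x$. Thus $\nudown=\nuup=:\nu_0$ and every cover is tight, $\nu_0(x)=\nu_0(y)+1$ for $x\covered y$. Then for any $\nu\in\TTTTT(P)$ the difference $\nu-\nu_0$ is nonnegative and antitone, since on each cover it equals $\nu(x)-\nu(y)-1\ge 0$; hence $\nu-\nu_0\in\overline\TTTTT(P)$ and $\nu_0\le\nu$. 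So $\nu_0$ is the global minimum, the unique minimal element, and $\RRRRR_K(H)$ is \gor.

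For the converse I would argue contrapositively and produce a second minimal element when $P$ is not pure. If some element lies on no longest chain then $\rho(x)+\coht(x)<r$ for that $x$, so $\nudown\ne\nuup$ and the two minimal elements above already differ. The remaining case---$P$ not pure yet every element on a longest chain, so $\nudown=\nuup$ and $P$ is necessarily non-graded---is where I expect the real difficulty. Here my plan is to fix a saturated chain $C$ from $x_0$ to a maximal element $m$ of length $<r$ and to construct a minimal $\nu$ with $\nu(m)\ge 2$, hence $\nu\ne\nudown$ since $\nudown(m)=1$. A natural attempt is to add $1$ to $\nudown$ along a longest chain through $m$; this lifts $m$, but it need not remain in $\TTTTT(P)$ when a lower cover of a lifted element interferes, so the chain must be chosen with care. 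Verifying minimality of the resulting $\nu$ is the crux: one reformulates minimality as the nonexistence of a nonempty poset ideal avoiding $\{\nu=1\}$ and closed upward along the covers on which $\nu$ drops by exactly $1$, and then shows that a single value-$1$ element pinned at the top of $C$ obstructs every such ideal. This local combinatorial verification in the non-graded case---rather than the reduction or the pure direction---is the main obstacle, and it is precisely the phenomenon that the paper's sequences with \condn\ are designed to control.
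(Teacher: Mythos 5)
The paper does not prove this theorem---it is quoted from Hibi's paper \cite{hib}---so there is no internal proof to compare against, and I have assessed your argument on its own terms. Your reduction to the uniqueness of the minimal element of $\TTTTT(P)$, your verification that both $\nudown$ and $\nuup$ are always minimal (your criterion for non-minimality via indicators of poset ideals is exactly the device of Lemma \ref{lem:ideal red}, and the ``only if'' half follows from writing an order-reversing map as a sum of such indicators), and the implication ``$P$ pure $\Rightarrow$ unique minimal element'' are all correct and complete. The converse is only half done. You correctly dispose of the case where some element lies on no maximal-length chain, since then $\nudown\neq\nuup$. But the remaining case---$P$ not pure yet $\rho(x)+\coht(x)=\rank P$ for every $x$ in your normalization---genuinely occurs (take two saturated chains $x_0<a_1<a_2<a_3$ and $x_0<b_1<b_2<b_3$ together with the single extra cover $a_1\covered b_3$), and there you only describe a plan: lift $\nudown$ by $1$ along a carefully chosen longest chain and check minimality. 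You acknowledge yourself that both the choice of chain and the minimality check are ``the main obstacle,'' and no argument is supplied; this is a real gap, not an omitted routine verification. Indeed, in the example above the lifted function $\nudown+\chi_{\{x_0,b_1,b_2,b_3\}}$ is minimal, but ruling out every admissible poset ideal requires inspecting covers such as $a_1\covered a_2$ and $a_2\covered a_3$, far from the value-$1$ element at the top of the short chain, so the closing move you propose (``a single value-$1$ element pinned at the top of $C$ obstructs every such ideal'') is not obviously the right one.

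The machinery the paper develops later is designed to close exactly this gap, and I would recommend using it in place of the ad hoc lifting. Work in $P^+$ with the paper's $r=\rank P^+$ and $F=\{x\in P\mid\rank[x_0,x]+\rank[x,\infty]<r\}$. Your hard case is $F=\emptyset$ with $P^+$ not pure; by (the contrapositive of) Lemma \ref{lem:3 sum pure} there are $z_1\leq z_2$ with $\rank[x_0,z_1]+\rank[z_1,z_2]+\rank[z_2,\infty]<r$, and $F=\emptyset$ forces $x_0<z_1<z_2<\infty$. Then $z_2$, $z_1$ is a \scn\ with
$$
r(z_2,z_1)=\rank[x_0,z_2]-\rank[z_1,z_2]+\rank[z_1,\infty]
=2r-\rank[z_2,\infty]-\rank[z_1,z_2]-\rank[x_0,z_1]>r,
$$
so $\rmax>r$, and Lemma \ref{lem:nu up down min} together with Theorem \ref{prop:gen deg} (or just the minimality of $\nu_0(x)=\rank[x,\infty]$, which has degree $r$) produces minimal elements of $\TTTTT(P)$ of two distinct degrees, hence at least two generators of the canonical module and $\RRRRR_K(H)$ is not \gor. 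In the first case $F\neq\emptyset$ one can likewise quote Lemma \ref{lem:number float} instead of comparing $\nudown$ with $\nuup$.
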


%
%
Here we recall that
Stanley \cite{sta8} defined the level property for a standard 
graded \cm\  algebra over a field and showed that a 
standard graded \cm\   algebra $A$  over a field is 
level if and only if the degree of the generators of the 
canonical module of $A$ is constant.

\medskip

In the following, let 
$H$ be a finite distributive lattice with unique minimal element $x_0$ and
let $P$ be the set of \joinirred\ elements of $H$.
We set $r\define\rank P^+$.
Since $\deg T^\nu=\nu(x_0)$ for $\nu\in\overline\TTTTT(P)$, 
and $\nu_0\colon P \to \NNN$,
$x\mapsto\rank[x,\infty]$
is a minimal element of $\TTTTT(P)$,
we see by 
Corollary \ref{cor:can gen} the following
\begin{cor}
\mylabel{cor:cri level}
$\RRRRR_K(H)$ is level if and only if 
$\nu(x_0)=r$ for any minimal element $\nu$ of $\TTTTT(P)$.
\end{cor}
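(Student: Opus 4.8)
The plan is to reduce the statement to the levelness criterion of Stanley \cite{sta8}: since $\RRRRR_K(H)$ is a standard graded \cm\ $K$-algebra, it is level if and only if the generators of its canonical module all lie in a single degree. By Corollary \ref{cor:can gen} these generators are exactly the monomials $T^\nu$ with $\nu$ a minimal element of $\TTTTT(P)$, and since $\deg T^\nu=\nu(x_0)$, levelness is equivalent to the assertion that the function $\nu\mapsto\nu(x_0)$ is constant on the set of minimal elements of $\TTTTT(P)$. So the whole argument comes down to identifying what that constant must be.

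The next step is to pin the constant to $r$ by exhibiting a particular minimal element of that degree. The map $\nu_0\colon x\mapsto\rank[x,\infty]$ has already been recorded as a minimal element of $\TTTTT(P)$. I would then observe that, because $x_0$ is the unique minimal element of $H$ and hence the minimum of $P$, the interval $[x_0,\infty]$ is all of $P^+$, so $\nu_0(x_0)=\rank[x_0,\infty]=\rank P^+=r$. Thus $T^{\nu_0}$ is a generator of the canonical module of degree exactly $r$, and whenever the degrees of the generators are constant, that constant is forced to be $r$.

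Combining these two observations yields both implications simultaneously. If $\RRRRR_K(H)$ is level, then every generator has the same degree as $T^{\nu_0}$, namely $r$, so $\nu(x_0)=r$ for every minimal $\nu$; conversely, if $\nu(x_0)=r$ for every minimal $\nu$, then all generators sit in degree $r$, and $\RRRRR_K(H)$ is level by Stanley's criterion.

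I do not expect a genuine obstacle here, since the content rests entirely on three inputs already at hand: Stanley's levelness criterion, Corollary \ref{cor:can gen}, and the minimality of $\nu_0$. The one point deserving a line of care is the evaluation $\nu_0(x_0)=r$, which relies on $x_0$ being the minimum of $P$ so that $[x_0,\infty]=P^+$. To make the role of $r$ more transparent I would additionally note that along any maximal chain $x_0=z_0\covered z_1\covered\cdots\covered z_{r-1}\covered\infty$ of $P^+$ the defining strict inequalities of $\TTTTT(P)$ give $\nu(z_0)>\nu(z_1)>\cdots>\nu(z_{r-1})\geq 1$, whence $\nu(x_0)\geq r$ for \emph{every} $\nu\in\TTTTT(P)$; this shows that $r$ is the least possible degree, that $\nu_0$ realizes it, and that the criterion may equivalently be read as ``no generator of the canonical module has degree exceeding $r$.''
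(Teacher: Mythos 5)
Your proposal is correct and follows essentially the same route as the paper: Stanley's criterion that levelness is equivalent to the generators of the canonical module having constant degree, Corollary \ref{cor:can gen} identifying the generators with minimal elements of $\TTTTT(P)$, and the minimal element $\nu_0\colon x\mapsto\rank[x,\infty]$ with $\nu_0(x_0)=r$ to pin down the constant. The extra observation that $\nu(x_0)\geq r$ for every $\nu\in\TTTTT(P)$ is a harmless (and correct) addition.
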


%
%

Next we state some basic facts on $\TTTTT(P)$.

\begin{lemma}
\mylabel{lem:ideal red}
Let $I$ be a non-empty poset ideal of $P$ and $\nu\in\TTTTT(P)$.
Suppose that $\nu(x)-\nu(y)\geq 2$ for any $x\in I$ and $y\in P^+\setminus I$ with $x<y$.
If we set
$$
\nu'(x)=
\left\{
\begin{array}{ll}
\nu(x)&\quad\mbox{ if $x\not\in I$ and}\\
\nu(x)-1&\quad\mbox{ if $x\in I$,}
\end{array}
\right.
$$
then $\nu'\in\TTTTT(P)$.
In particular, $\nu$ is not a minimal element of $\TTTTT(P)$.
\end{lemma}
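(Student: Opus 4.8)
The plan is to verify directly that $\nu'$ satisfies the two defining conditions of $\TTTTT(P)$, namely strict positivity and strict antitonicity, and then to exhibit $\nu'$ as an element strictly below $\nu$ in the order on $\TTTTT(P)$. For positivity, the key observation is that $\infty\in P^+\setminus I$ always holds, since $I\subseteq P$, and that $z<\infty$ for every $z\in P$. Hence for any $z\in I$ the hypothesis applied with $x=z$ and $y=\infty$ gives $\nu(z)=\nu(z)-\nu(\infty)\geq 2$, so $\nu'(z)=\nu(z)-1\geq 1$. For $z\notin I$ we simply have $\nu'(z)=\nu(z)>0$. Thus $\nu'(z)>0$ for all $z\in P$.

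For strict antitonicity, I would take $x$, $y\in P$ with $x<y$ and split into cases according to membership in $I$. If $x$, $y\in I$ or $x$, $y\notin I$, then $\nu'(x)-\nu'(y)=\nu(x)-\nu(y)>0$ because $\nu\in\TTTTT(P)$. The case $x\notin I$, $y\in I$ cannot occur: since $I$ is a poset ideal, $y\in I$ together with $x<y$ forces $x\in I$. The only remaining case is $x\in I$, $y\notin I$; here $y\in P^+\setminus I$, so the hypothesis yields $\nu(x)-\nu(y)\geq 2$, whence $\nu'(x)=\nu(x)-1>\nu(y)=\nu'(y)$. This boundary case is the single place where the strengthened gap $\geq 2$ is actually used, and it is the only step requiring genuine care; everything else is bookkeeping. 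Combining the cases gives $\nu'\in\TTTTT(P)$.

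Finally, to conclude that $\nu$ is not minimal, I would observe that $\nu-\nu'$ is exactly the indicator function of $I$, taking the value $1$ on $I$ and $0$ elsewhere. Because $I$ is downward closed, this function is antitone, so $\nu-\nu'\in\overline\TTTTT(P)$, which by the definition of the order means $\nu'\leq\nu$. As $I$ is non-empty, $\nu'\neq\nu$, so $\nu$ strictly dominates $\nu'$ and therefore cannot be a minimal element of $\TTTTT(P)$.
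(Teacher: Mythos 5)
Your proof is correct and complete; the paper states this lemma without proof, treating it as a routine verification, and your argument is exactly the intended one. In particular you correctly identify the two points that need care: positivity of $\nu'$ on $I$ comes from applying the hypothesis with $y=\infty$ (using the convention $\nu(\infty)=0$), and the gap $\geq 2$ is needed precisely at the boundary case $x\in I$, $y\in P\setminus I$.
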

As a corollary, we have the following

\begin{lemma}
\mylabel{lem:use ideal red}
Let $\nu\in\TTTTT(P)$.
If $\{z\in P^+\mid \nu(z)>\rank[z,\infty]\}$ is a non-empty poset ideal of $P^+$,
then $\nu$ is not a minimal element of $\TTTTT(P)$.
\end{lemma}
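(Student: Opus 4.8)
The plan is to deduce this directly from Lemma \ref{lem:ideal red}, so the whole task reduces to checking that the set
$I\define\{z\in P^+\mid \nu(z)>\rank[z,\infty]\}$
meets the hypotheses of that lemma. First I would note that $\infty\notin I$: since $\nu(\infty)=0$ and $[\infty,\infty]=\{\infty\}$ has rank $0$, the defining inequality $\nu(\infty)>\rank[\infty,\infty]$ would read $0>0$, which is false. Hence $I\subseteq P$, and because $I$ is assumed to be a non-empty poset ideal of $P^+$ not containing $\infty$, it is in particular a non-empty poset ideal of $P$. This is exactly the kind of input Lemma \ref{lem:ideal red} requires.

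The substance of the argument is then to verify the gap condition $\nu(x)-\nu(y)\geq 2$ for every $x\in I$ and $y\in P^+\setminus I$ with $x<y$. From $x\in I$ I get $\nu(x)>\rank[x,\infty]$, hence $\nu(x)\geq\rank[x,\infty]+1$ since the values are integers; from $y\notin I$ I get $\nu(y)\leq\rank[y,\infty]$. It remains to compare the two ranks: since $x<y$, any maximal chain in $[y,\infty]$ can be lengthened by prepending $x$, yielding a chain in $[x,\infty]$ that is one longer, so $\rank[x,\infty]\geq\rank[y,\infty]+1$. Combining the three inequalities gives
$$
\nu(x)-\nu(y)\geq (\rank[x,\infty]+1)-\rank[y,\infty]\geq 2,
$$
which is precisely the hypothesis needed. (Note this uniform computation also covers the case $y=\infty$, where $\nu(y)=\rank[y,\infty]=0$.)

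With both conditions established for this $I$, Lemma \ref{lem:ideal red} applies verbatim and gives its conclusion that $\nu$ is not a minimal element of $\TTTTT(P)$; concretely, the $\nu'$ it produces differs from $\nu$ by subtracting $1$ exactly on the non-empty ideal $I$, so $\nu'<\nu$ in $\TTTTT(P)$.

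I do not expect a genuine obstacle here; the computation is short and forced. The only two points that need care are the rank comparison $\rank[x,\infty]\geq\rank[y,\infty]+1$ together with the bookkeeping that turns the strict and weak inequalities on integer values into a clean gap of $2$, and the preliminary observation that $\infty\notin I$, which is what guarantees that the ideal of $P^+$ we are handed is admissible as an ideal of $P$ for the earlier lemma.
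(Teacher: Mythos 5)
Your proof is correct and is exactly the deduction the paper intends: the paper states this lemma as an immediate corollary of Lemma \ref{lem:ideal red}, and your verification that $I$ is a non-empty poset ideal of $P$ (via $\infty\notin I$) satisfying the gap condition $\nu(x)-\nu(y)\geq 2$ is the argument being left to the reader. No issues.
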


We also state the following fact.

\begin{lemma}
\mylabel{lem:3 sum pure}
Let $Q$ be a poset and $s$ a positive integer.
If for any $z_0$, $z_1$, $z_2$ and $z_3\in Q$ such that $z_0\leq z_1\leq z_2\leq z_3$,
$z_0$ is a minimal element of $Q$ and $z_3$ is a maximal element of $Q$, it holds
$$
\rank[z_0,z_1]+\rank[z_1,z_2]+\rank[z_2,z_3]=s,
$$
then $Q$ is pure of rank $s$.
\end{lemma}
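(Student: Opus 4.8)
The plan is to show directly that every maximal chain of $Q$ has length exactly $s$; since $\rank Q$ is the maximum length of a chain, this yields that $Q$ is pure of rank $s$. The first observation is that the hypothesis already pins down the rank of the relevant intervals: applying it with $z_1=z_2=z_0$ gives, for every minimal element $z_0$ and every maximal element $z_3$ with $z_0\leq z_3$,
\[
\rank[z_0,z_0]+\rank[z_0,z_0]+\rank[z_0,z_3]=\rank[z_0,z_3]=s .
\]
Thus the two endpoints of any maximal chain span an interval of rank $s$, and the whole difficulty is the matching lower bound, namely that a maximal chain cannot be strictly shorter than the rank of the interval it spans.

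The key step is to establish an additive (telescoping) relation along a fixed maximal chain $w_0\covered w_1\covered\cdots\covered w_k$, where $w_0$ is minimal and $w_k$ is maximal. For each $i$ with $0\leq i\leq k-1$ I would apply the hypothesis twice: once to the quadruple $(z_0,z_1,z_2,z_3)=(w_0,w_i,w_{i+1},w_k)$, and once to the ``collapsed'' quadruple $(w_0,w_i,w_i,w_k)$, which yields the two-interval identity $\rank[w_0,w_i]+\rank[w_i,w_k]=s$. Subtracting the two resulting equations cancels the unwanted term $\rank[w_0,w_i]$, and using that $\rank[w_i,w_{i+1}]=1$ because $w_i\covered w_{i+1}$, one obtains
\[
\rank[w_i,w_k]=1+\rank[w_{i+1},w_k].
\]
The crux is exactly this subtraction: feeding $z_1=z_2$ into the hypothesis collapses the middle term and converts the three-interval identity into a two-interval one, so that comparison with the consecutive instance isolates a clean recursion rather than a mere inequality.

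Finally, telescoping the recursion downward from $i=k$ (where $\rank[w_k,w_k]=0$) gives $\rank[w_0,w_k]=k$, while the first observation gives $\rank[w_0,w_k]=s$; hence $k=s$. As this holds for every maximal chain, $Q$ is pure of rank $s$. The main obstacle to anticipate is precisely the lower bound: a naive chain-counting argument only shows $\rank[w_0,w_i]\geq i$ and therefore $k\leq s$, so establishing equality genuinely requires the cancellation trick above rather than length estimates alone. Finiteness of $Q$ is used implicitly throughout, to guarantee that maximal chains exist and that each has a minimal bottom and a maximal top to which the hypothesis may be applied.
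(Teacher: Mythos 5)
Your proof is correct and rests on the same mechanism as the paper's: applying the hypothesis to the quadruple $(w_0,w_i,w_{i+1},w_k)$ built from two consecutive elements of a maximal chain and comparing it with the collapsed instance $z_1=z_2$ to extract $\rank[w_i,w_k]=1+\rank[w_{i+1},w_k]$. The only difference is organizational — you telescope this recursion directly to get $\rank[w_0,w_k]=k=s$, whereas the paper argues by contradiction at the first index $j$ where the chain length falls behind $\rank[x_0,x_j]$ — so the two arguments are essentially the same.
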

\begin{proof}
Let
$
x_0<x_1<\cdots<x_t
$
be an arbitrary maximal chain of $Q$.
We shall show that $t=s$.

Since
$$
\rank[x_0,x_t]=\rank[x_0,x_0]+\rank[x_0,x_0]+\rank[x_0,x_t]=s
$$
by assumption,
we see that
$$
t\leq \rank[x_0,x_t]=s.
$$

Assume $t<s$.
Then 
$\{i\mid i<\rank[x_0,x_i]\}\neq\emptyset$.
Set $j=\min\{i\mid i<\rank[x_0,x_i]\}$.
Then $j\geq 1$. Further, we see by assumption, that
\begin{eqnarray*}
s&=&\rank[x_0,x_{j-1}]+\rank[x_{j-1},x_j]+\rank[x_j,x_t]\\
&=&(j-1)+1+\rank[x_j,x_t].
\end{eqnarray*}
Since 
$\rank[x_0,x_j]+\rank[x_j,x_t]=
\rank[x_0,x_j]+\rank[x_j,x_j]+\rank[x_j,x_t]=s$, we see that
$\rank[x_j,x_t]=s-\rank[x_0,x_j]$.
Thus,
$$
s=j+(s-\rank[x_0,x_j])<j+s-j=s,
$$
by the definition of $j$.
This is a contradiction.
\end{proof}

\section{A criterion of levelness of a Hibi ring}
\mylabel{sec:level}

In this section, we give a necessary and sufficient condition for a Hibi ring to be level.
Recall that 
$H$ is a finite distributive lattice with unique minimal element $x_0$,
$P$ is the set of \joinirred\ elements of $H$ and
$r=\rank P^+$.
First we make the following 
\begin{definition}\rm
\mylabel{def:condn}
Let $y_1$, $x_1$, $y_2$, $x_2$, \ldots, $y_t$, $x_t$ be a 
(possibly empty) sequence of elements in $P$.
We say the sequence 
$y_1$, $x_1$, $y_2$, $x_2$, \ldots, $y_t$, $x_t$ satisfies the \condn\ 
if 
\begin{enumerate}
\item
\mylabel{item:cond n 0}
$x_1\neq x_0$,
\item\mylabel{item:cond n 1}
$y_1>x_1<y_2>x_2<\cdots<y_t>x_t$ and
\item\mylabel{item:cond n 2}
for any $i$, $j$ with $1\leq i<j\leq t$,
$y_i\not\geq x_j$.
\end{enumerate}
\end{definition}

\begin{nnremark}
\rm
A \scn\ may be an empty sequence, i.e.,  $t$ may be $0$.
\end{nnremark}
For a sequence with condtion N, we make the following

\begin{nndefinition}\rm
Let
$y_1$, $x_1$, $y_2$, $x_2$, \ldots, $y_t$, $x_t$ 
be a \scn.
We set
$$
r(y_1, x_1, \ldots, x_t, y_t)\define
\sum_{i=1}^t(\rank[x_{i-1},y_i]-\rank[x_i,y_i])+\rank[x_t,\infty],
$$
where we set an empty sum to be 0.
\end{nndefinition}

\begin{nnremark}\rm
For an empty sequence, we set $r()=\rank[x_0,\infty]=r$.
\end{nnremark}
Next we state a useful criterion of minimal property 
of an element of $\TTTTT(P)$.
First we state the following

\begin{lemma}
\mylabel{lem:min t suf}
Supposet that $\nu\in\TTTTT(P)$.
If there are elements $z_1$, $w_1$, $z_2$, $w_2$, \ldots, $z_s$, $w_s\in P$ such that
\begin{enumerate}
\item
$z_1>w_1<z_2>w_2<\cdots<z_s>w_s$ and
\item
$\nu(w_i)-\nu(z_{i+1})=\rank[w_i,z_{i+1}]$ for $0\leq i\leq s$,
where we set $w_0=x_0$ and $z_{s+1}=\infty$,
\end{enumerate}
then $\nu$ is a minimal element of $\TTTTT(P)$.
\end{lemma}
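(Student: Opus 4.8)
The plan is to verify minimality straight from the definition: I would take an arbitrary $\nu'\in\TTTTT(P)$ with $\nu'\leq\nu$ and show that necessarily $\nu'=\nu$. Writing $\mu\define\nu-\nu'$, the hypothesis $\nu'\leq\nu$ says precisely that $\mu\in\overline\TTTTT(P)$, so $\mu$ is non-negative and order-reversing, i.e. $x\leq y$ in $P^+$ implies $\mu(x)\geq\mu(y)\geq0$ (here I extend $\mu$ to $P^+$ by $\mu(\infty)=0$, which is consistent with $\nu(\infty)=\nu'(\infty)=0$). The goal then reduces to showing $\mu\equiv0$.

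The first ingredient is the elementary estimate that for every $\eta\in\TTTTT(P)$ and every $x\leq y$ in $P^+$ one has $\eta(x)-\eta(y)\geq\rank[x,y]$: along a chain in $[x,y]$ of length $\rank[x,y]$ the value of $\eta$ drops by at least $1$ at each covering step, since $\eta$ is integer valued and strictly decreasing. Applying this to $\eta=\nu'$ on the pairs $w_i\leq z_{i+1}$ (with the conventions $w_0=x_0$, $z_{s+1}=\infty$) and comparing with the equalities in hypothesis (2) for $\nu$, I obtain
\[
\mu(w_i)-\mu(z_{i+1})
=\bigl(\nu(w_i)-\nu(z_{i+1})\bigr)-\bigl(\nu'(w_i)-\nu'(z_{i+1})\bigr)
\leq \rank[w_i,z_{i+1}]-\rank[w_i,z_{i+1}]=0
\]
for $0\leq i\leq s$. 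On the other hand $w_i\leq z_{i+1}$ forces $\mu(w_i)\geq\mu(z_{i+1})$ because $\mu$ is order-reversing; hence $\mu(w_i)=\mu(z_{i+1})$ for all such $i$.

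The second ingredient is to feed these equalities into the zigzag. Since $z_i>w_i$ gives $\mu(w_i)\geq\mu(z_i)$, the relations chain together as
\[
\mu(x_0)=\mu(z_1)\leq\mu(w_1)=\mu(z_2)\leq\cdots\leq\mu(w_s)=\mu(\infty)=0,
\]
so $\mu(x_0)\leq0$; combined with $\mu(x_0)\geq0$ this yields $\mu(x_0)=0$. Finally, $x_0$ is the minimum of $P$ (it is the least element of $H$ and $P\subseteq H$), so order-reversing and non-negativity give $0=\mu(x_0)\geq\mu(y)\geq0$ for every $y\in P$; thus $\mu\equiv0$, i.e. $\nu'=\nu$, which proves that $\nu$ is minimal.

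I expect no serious obstacle here, since the argument is essentially a squeeze. The two points that need care are the boundary conventions $w_0=x_0$ and $z_{s+1}=\infty$ (which also dispose of the degenerate case $s=0$, where the single relation already gives $\mu(x_0)=\mu(\infty)=0$), and the conceptual crux: the equalities imposed in (2) are exactly what upgrades the generic inequality $\nu'(w_i)-\nu'(z_{i+1})\geq\rank[w_i,z_{i+1}]$ into the forced equalities $\mu(w_i)=\mu(z_{i+1})$ that, together with antitonicity at the global minimum $x_0$, collapse $\mu$ to zero.
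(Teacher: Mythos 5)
Your proof is correct and is essentially the paper's argument in direct form: both hinge on the inequality $\nu'(w_i)-\nu'(z_{i+1})\geq\rank[w_i,z_{i+1}]=\nu(w_i)-\nu(z_{i+1})$ together with the antitonicity of $\nu-\nu'$ to propagate the difference along the zigzag from $x_0$ to $\infty$. The paper packages this as a contradiction via the maximal index $j$ with $\nu'(w_j)<\nu(w_j)$, whereas you chain the forced equalities $\mu(w_i)=\mu(z_{i+1})$ directly to squeeze $\mu(x_0)=0$; the underlying idea is the same.
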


\begin{nnremark}\rm
$s$ may be $0$ in Lemma \ref{lem:min t suf},
i.e., if $\nu(x_0)=r$, then $\nu$ is a minimal element of $\TTTTT(P)$.
\end{nnremark}
{\bf Proof of Lemma \ref{lem:min t suf}}\ 
The case where $s=0$ is obvious.
Thus, we consider the case where $s>0$.
Assume the contrary and suppose that there is $\nu'\in\TTTTT(P)$ with $\nu'<\nu$.
Then $\nu(w_0)-\nu'(w_0)=(\nu-\nu')(w_0)>0$
since $\nu-\nu'\in\overline\TTTTT(P)$ and 
$\nu\neq\nu'$.
Set
$j=\max\{i\mid\nu'(w_i)<\nu(w_i)\}$.
Since 
$\nu(w_j)-\nu(z_{j+1})=\rank[w_j,z_{j+1}]\leq\nu'(w_j)-\nu'(z_{j+1})$
by assumption,
we see that
$$
\nu'(z_{j+1})<\nu(z_{j+1}).
$$
In particular, 
$j<s$.
Since $\nu-\nu'\in\overline\TTTTT(P)$,
$(\nu-\nu')(w_{j+1})\geq(\nu-\nu')(z_{j+1})$, i.e.,
$\nu(w_{j+1})-\nu'(w_{j+1})\geq\nu(z_{j+1})-\nu'(z_{j+1})>0$.
This contradicts to the definition of $j$.
\qed

Next we show a kind of converse of this lemma.

\begin{lemma}
\mylabel{lem:min t nec}
Let $\nu$ be a minimal element of $\TTTTT(P)$.
Then there exists a sequence of elements 
$y_1$, $x_1$, \ldots, $y_t$, $x_t$ with \condn\ such that
$$
\nu(x_i)-\nu(y_{i+1})=\rank[x_i,y_{i+1}]
$$
for $0\leq i\leq t$, where we set $y_{t+1}=\infty$.
\end{lemma}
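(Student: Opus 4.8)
The plan is to build the sequence greedily and then clean it up. Throughout I use that for $x\le y$ in $P^+$ one always has $\nu(x)-\nu(y)\ge\rank[x,y]$, because $\nu$ is strictly order-reversing; call the interval $[x,y]$ \emph{tight} when equality holds, equivalently when some maximal chain from $x$ to $y$ of length $\rank[x,y]$ has $\nu$ dropping by exactly $1$ at each step. Set $B\define\{z\in P^+\mid \nu(z)=\rank[z,\infty]\}$ and $D\define P^+\setminus B$, so $\infty\in B$; the desired equalities say precisely that each up-interval $[x_i,y_{i+1}]$ (with $y_{t+1}=\infty$) is tight. If $x_0\in B$, i.e. $\nu(x_0)=r$, the empty sequence works, since for $t=0$ the only required relation is $\nu(x_0)=\rank[x_0,\infty]=r$. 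So I assume $x_0\in D$.

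First I would produce \emph{some} sequence $x_0<y_1>x_1<\cdots<y_t>x_t$ whose up-intervals $[x_i,y_{i+1}]$ ($0\le i\le t$, $y_{t+1}=\infty$) are all tight and whose last valley satisfies $x_t\in B$, ignoring condition N for the moment. The engine is the contrapositive of Lemma \ref{lem:ideal red}: since $\nu$ is minimal, every non-empty poset ideal $I$ of $P$ carries a tight cover on its boundary, i.e. there are $a\in I$, $b\in P^+\setminus I$ with $a\covered b$ and $\nu(a)-\nu(b)=1$. Applied to $\{x_0\}$ this shows $x_0$ has a tight cover, so the first ascent is non-trivial. From a current valley $v\notin B$ I ascend to a peak $y$ chosen maximal in $\{w\ge v\mid [v,w]\text{ is tight}\}$ (a relative poset ideal of $[v,\infty]$ not containing $\infty$), recording a tight up-interval $[v,y]$ with $y>v$. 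To descend I apply the contrapositive of Lemma \ref{lem:ideal red} to $\langle y\rangle\define\{z\in P\mid z\le y\}$, obtaining a tight cover $a\covered b$ with $a\le y$ and $b\not\le y$. If $a<y$ I take the new valley to be $a$; if $a=y$ I take the new valley $v'$ to be the element just below $y$ on a fully tight maximal chain from $v$ to $y$ (which exists because $[v,y]$ is tight and $y>v$), so that $v'\covered y\covered b$ are both tight, whence $[v',b]$ is tight with $b\not\le y$. In either case the new valley lies strictly below $y$ and admits a tight ascent escaping $\langle y\rangle$, so the process can be continued as long as the current valley lies outside $B$.

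Once such a tight sequence reaching $\infty$ is in hand, I enforce condition N by length-reducing surgery. If condition \ref{item:cond n 2} fails, say $y_i\ge x_j$ with $i<j$, I splice the sequence to $\cdots<y_i>x_j<y_{j+1}>\cdots$: the retained up-intervals $[x_{i-1},y_i]$ and $[x_j,y_{j+1}]$ are still tight, and $y_i>x_j$ is a legitimate descent; if instead $y_i=x_j$, the two tight up-intervals merge into the single tight up-interval $[x_{i-1},y_{j+1}]$, using $\rank[x_{i-1},y_i]+\rank[y_i,y_{j+1}]=\rank[x_{i-1},y_{j+1}]$. Each surgery strictly shortens the sequence while preserving tightness of all up-intervals and the property $x_t\in B$, so iterating produces a sequence satisfying condition \ref{item:cond n 2}. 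A similar deletion of $y_1,x_1$ whenever $x_1=x_0$ secures condition \ref{item:cond n 0}; note that once condition N holds the peaks are automatically distinct (else $y_i=y_j\ge x_j$), which also bounds the length of any such sequence.

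The step I expect to be the main obstacle is the \emph{termination} of the greedy construction, namely proving that the zigzag actually reaches a valley in $B$ rather than cycling. Escaping only the principal ideal of the current peak, as above, does not by itself forbid returning to an earlier peak, whereas escaping the down-closure of \emph{all} peaks produced so far would give a strictly increasing chain of ideals and hence immediate termination, but is awkward to reconcile with the requirement that each descent begin at the current peak, since the boundary tight cover furnished by Lemma \ref{lem:ideal red} for that larger ideal need not sit below the most recent peak. Pinning down a monotone progress measure compatible with the zigzag shape — equivalently, showing that a minimal $\nu$ can never get stuck at a peak outside $B$ — is therefore the crux, while the descent-existence argument and the condition-N surgery are comparatively routine. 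At the end I would verify that the final non-empty sequence still satisfies condition \ref{item:cond n 0} together with all the equalities $\nu(x_i)-\nu(y_{i+1})=\rank[x_i,y_{i+1}]$ for $0\le i\le t$.
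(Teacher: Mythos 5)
Your construction has a genuine gap, and it is exactly the one you flag yourself: termination of the greedy zigzag. Escaping only the principal ideal $\langle y\rangle$ of the current peak does not prevent the walk from revisiting earlier peaks or valleys; since $P$ is finite, a non-terminating walk must cycle, and you supply no potential function ruling that out. You correctly observe that escaping the down-closure of \emph{all} peaks so far would terminate but clashes with the requirement that each descent start at the current peak. So the proof is incomplete at its central step, as you acknowledge.

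The paper resolves precisely this tension by reversing the order of operations: instead of building the zigzag forward one step at a time, it builds a layered structure all at once and only afterwards extracts a sequence. Concretely it sets $U_1=\{y\in P^+\mid \rank[x_0,y]=\nu(x_0)-\nu(y)\}$ (your tight-reachable set from $x_0$), $D_1$ the elements strictly below some element of $U_1$ and not in $U_1$, $U_2$ the elements tight-reachable from $D_1$ and not already collected, and so on, each layer taken disjoint from all previous ones. Termination is then automatic ($P^+$ is finite and the layers are disjoint); the union $I=U_1\cup D_1\cup U_2\cup\cdots$ is a poset ideal; and if $\infty\notin I$, every comparable pair crossing out of $I$ drops $\nu$ by at least $2$, so Lemma \ref{lem:ideal red} contradicts minimality of $\nu$. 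Hence $\infty\in U_{t+1}$ for some $t$, and backtracking $\infty=y_{t+1}>x_t\in D_t$, $y_t\in U_t$, \ldots\ produces the sequence. Condition N then holds for free --- if $y_i\geq x_j$ with $i<j$, the definition of $D_i$ would force $x_j$ into $U_1\cup D_1\cup\cdots\cup D_i$, contradicting $x_j\in D_j$ and the disjointness of the layers, and $x_1\neq x_0$ since $x_0\in U_1$ --- so no surgery is needed. Your ascent/descent observations and the condition-N cleanup are reasonable as far as they go, but the missing monotone progress measure is supplied in the paper by growing a single increasing chain of disjoint layers globally rather than stepping along a path locally; to complete your argument you would need to import that idea (or an equivalent one) in any case.
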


\begin{nnremark}\rm
$y_1$, $x_1$, \ldots, $y_t$, $x_t$ may be an empty
sequence, i.e., $t$ may be $0$.
\end{nnremark}

\noindent
{\bf Proof of Lemma \ref{lem:min t nec}}\
Set 
$U_1\define\{y\in P^+\mid\rank[x_0,y]_{P^+}=\nu(x_0)-\nu(y)\}$,
$D_1\define\{x\in P^+\setminus U_1\mid\exists y\in U_1$ such that $y>x\}$,
$U_2\define\{y\in P^+\setminus(U_1\cup D_1)\mid\exists x\in D_1$ such that 
$x<y$ and  $\rank[x,y]_{P^+}=\nu(x)-\nu(y)\}$,
$D_2\define\{x\in P^+\setminus (U_1\cup D_1\cup U_2)\mid\exists y\in U_2$ such that $y>x\}$
and so on.
Since $P$ is a finite set, the procedure stops after finite steps,
i.e., $U_t=\emptyset$ or $D_t=\emptyset$ for some $t$.

It is proved by the induction on $s$ that
\[
U_1\cup D_1\cup U_2\cup D_2\cup\cdots\cup U_s\cup D_s
\]
is a poset ideal of $P^+$ for any $s$.
Therefore, 
$$
I=U_1\cup D_1\cup U_2\cup D_2\cup\cdots
$$
is a poset ideal of $P^+$.

Suppose that $\infty\not\in I$.
Then $P^+\setminus I\neq \emptyset$.
Moreover, if $z\in I$, $z'\in P^+\setminus I$ and $z<z'$, then $\nu(z)\geq\nu(z')+2$.
In fact, if $z\in D_s$ for some $s$, then 
$\nu(z)-\nu(z')>\rank[z,z']\geq 1$, since $z'\not\in U_{s+1}\cup D_s\cup U_s\cup\cdots\cup U_1$.
If $z\in U_s$ for some $s$, then there is $x\in D_{s-1}$ such that 
$\nu(x)-\nu(z)=\rank[x,z]$, where we set $D_0=\{x_0\}$.
Since $z'\not\in U_s\cup D_{s-1}\cup U_{s-1}\cup\cdots\cup U_1$, we see that
$\nu(x)-\nu(z')>\rank[x,z']$.
Therefore,
\begin{eqnarray*}
\nu(z)-\nu(z')&=&
(\nu(x)-\nu(z'))-(\nu(x)-\nu(z))\\
&>&\rank[x,z']-\rank[x,z]\geq\rank[z,z']\geq 1.
\end{eqnarray*}
Thus, by Lemma \ref{lem:ideal red}, we see that $\nu$ is not a minimal element
of $\TTTTT(P)$, contradicts the assumption.

Therefore, $\infty\in I$.
Take $t$ with $\infty\in U_{t+1}$.
Set $y_{t+1}=\infty$,
take $x_t\in D_t$ such that $x_t<y_{t+1}$ and $\rank[x_t,y_{t+1}]=\nu(x_t)-\nu(y_{t+1})$,
take $y_t\in U_t$ such that $y_t>x_t$, 
take $x_{t-1}\in D_{t-1}$ such that $x_{t-1}<y_{t}$ and 
$\rank[x_{t-1},y_{t}]=\nu(x_{t-1})-\nu(y_{t})$,
take $y_{t-1}\in U_{t-1}$ such that $y_{t-1}>x_{t-1}$ 
and so on.
Then it is easily verified that
$y_1$, $x_1$, \ldots, $y_t$, $x_t$ is a \scn\  and
$$
\nu(x_i)-\nu(y_{i+1})=\rank[x_i,y_{i+1}]
$$
for $0\leq i\leq t$.
\qed

Here we state the following fact.

\begin{lemma}
\mylabel{lem:scn finite}
There are only finitely many \sscn.
\end{lemma}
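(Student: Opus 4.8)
The plan is to reduce the finiteness assertion to a uniform bound on the length $t$ of a \scn. Because every entry $x_i$ and $y_i$ of such a sequence lies in the fixed finite poset $P$, a \scn\ of length $t$ is nothing but a $2t$-tuple of elements of $P$, and so there are at most $(\#P)^{2t}$ of them. Hence, once I produce a bound on $t$ that does not depend on the individual sequence, the total number of \sscn\ will be at most the finite sum $\sum_{t}(\#P)^{2t}$, and the lemma will follow.

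The crux of the argument---and the only step that is not pure bookkeeping---is the observation that the elements $x_1,\ldots,x_t$ occurring in a \scn\ are pairwise distinct. First I would note that condition \ref{item:cond n 1} yields $y_i>x_i$ for every $i$. Now suppose $x_i=x_j$ for some $i<j$; then $y_i>x_i=x_j$, so in particular $y_i\geq x_j$, directly contradicting condition \ref{item:cond n 2}. Thus no two of the $x_i$ can coincide. This is precisely the feature that \condn\ was designed to guarantee, so I expect it to be the conceptual heart of the proof, even though the verification itself is short.

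Finally, since $x_1,\ldots,x_t$ are distinct elements of the finite set $P$, we obtain $t\leq\#P$. Combined with the counting observation above, this shows there are at most $\sum_{t=0}^{\#P}(\#P)^{2t}$ \sscn, a finite quantity, which completes the proof. The remaining details are entirely routine: the whole difficulty is concentrated in the distinctness of the $x_i$, after which both the length bound and the final count are immediate.
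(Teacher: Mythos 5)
Your proof is correct and follows essentially the same route as the paper: the paper also derives the distinctness of $x_1,\ldots,x_t$ from $x_i\leq y_i$ together with condition \ref{item:cond n 2} ($y_i\not\geq x_j$ for $i<j$), and concludes finiteness from the finiteness of $P$. Your version merely spells out the resulting length bound $t\leq\#P$ and the counting of $2t$-tuples, which the paper leaves implicit.
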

\begin{proof}
If $y_1$, $x_1$, \ldots, $y_t$, $x_t$ is a \scn,
then $x_i\leq y_i$  and $x_j\not\leq y_i$ for $j>i$.
Therefore, we see that $x_1$, \ldots, $x_t$ are distinct elements of $P$.
Since $P$ is a finite set, we see the result.
\end{proof}
Now we make the following

\begin{nndefinition}\rm
We set
$\rmax\define\max
\{r(y_1,x_1,\ldots,y_t,x_t)\mid y_1$, $x_1$, \ldots, $y_t$, $x_t$ 
is a \scn$\}$.
\end{nndefinition}
We note here the following fact.

\begin{nnremark}\rm
$r$ is not necessarily equal to
$
\min\{r(y_1,x_1,\ldots,y_t,x_t)\mid y_1$, $x_1$, \ldots, $y_t$, $x_t$ 
is a \scn$\}$.
\end{nnremark}
As a corollary of Lemma \ref{lem:min t nec}, we see the following fact.

\begin{cor}
\mylabel{cor:nu x0 range}
If $\nu$ is a minimal element of $\TTTTT(P)$, then
$r\leq \nu(x_0)\leq \rmax$.
\end{cor}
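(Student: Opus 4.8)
The plan is to establish the two inequalities separately. The lower bound $r\le\nu(x_0)$ in fact holds for \emph{every} $\nu\in\TTTTT(P)$, so minimality plays no role there; the upper bound $\nu(x_0)\le\rmax$ is exactly where the decomposition furnished by Lemma \ref{lem:min t nec} is used.

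For the lower bound, I would fix a maximal chain of $P^+$ of length $r=\rank P^+$. Since $x_0$ is the unique minimal element of $P^+$ and $\infty$ is its unique maximal element, such a chain has the form $x_0<z_1<\cdots<z_{r-1}<\infty$. Because $\nu$ is strictly order-reversing on $P^+$ and $\nu(\infty)=0$, the values $\nu(x_0)>\nu(z_1)>\cdots>\nu(z_{r-1})>\nu(\infty)=0$ form a strictly decreasing sequence of $r+1$ non-negative integers ending at $0$; hence $\nu(x_0)\ge r$.

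For the upper bound, I would apply Lemma \ref{lem:min t nec} to the minimal element $\nu$ to obtain a \scn\ $y_1,x_1,\ldots,y_t,x_t$ satisfying $\nu(x_i)-\nu(y_{i+1})=\rank[x_i,y_{i+1}]$ for $0\le i\le t$, with $y_{t+1}=\infty$. Then I would telescope along the list $x_0,y_1,x_1,y_2,\ldots,y_t,x_t,\infty$:
$$\nu(x_0)=\nu(x_0)-\nu(\infty)=\sum_{i=0}^{t}\bigl(\nu(x_i)-\nu(y_{i+1})\bigr)+\sum_{i=1}^{t}\bigl(\nu(y_i)-\nu(x_i)\bigr).$$
The first sum equals $\sum_{i=0}^{t}\rank[x_i,y_{i+1}]$ by the lemma. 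For the second, each $x_i<y_i$ and $\nu$ strictly decreases along a maximal chain from $x_i$ to $y_i$, so $\nu(x_i)-\nu(y_i)\ge\rank[x_i,y_i]$, that is, $\nu(y_i)-\nu(x_i)\le-\rank[x_i,y_i]$. Substituting gives $\nu(x_0)\le\sum_{i=0}^{t}\rank[x_i,y_{i+1}]-\sum_{i=1}^{t}\rank[x_i,y_i]$, and recognizing the right-hand side as $r(y_1,x_1,\ldots,y_t,x_t)$ yields $\nu(x_0)\le r(y_1,x_1,\ldots,y_t,x_t)\le\rmax$.

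The only genuinely delicate step will be matching the telescoped expression with the definition of $r(y_1,x_1,\ldots,y_t,x_t)$: one must re-index $\sum_{i=0}^{t}\rank[x_i,y_{i+1}]$ as $\sum_{i=1}^{t}\rank[x_{i-1},y_i]+\rank[x_t,\infty]$ and check the signs, after which the identity is immediate. Everything else reduces to the elementary fact that a strictly order-reversing integer-valued function drops by at least the rank across any interval.
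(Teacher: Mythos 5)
Your proposal is correct and follows essentially the same route as the paper: the lower bound comes from $\nu$ being strictly order-reversing along a maximal chain of $P^+$ with $\nu(\infty)=0$, and the upper bound comes from applying Lemma \ref{lem:min t nec}, telescoping along $x_0,y_1,x_1,\ldots,x_t,\infty$, and bounding $\nu(x_i)-\nu(y_i)$ below by $\rank[x_i,y_i]$ to recognize $r(y_1,x_1,\ldots,y_t,x_t)\leq\rmax$. The re-indexing you flag as delicate is exactly the computation the paper carries out.
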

\begin{proof}
Since $\nu\in\TTTTT(P)$, we see that
$r\leq\nu(x_0)$.
On the other hand, by Lemma \ref{lem:min t nec}, we see that there is a sequence
$y_1$, $x_1$, \ldots, $y_t$, $x_t$ with \condn\ such that
$\nu(x_{i-1})-\nu(y_i)=\rank[x_{i-1},y_i]$ for $1\leq i\leq t+1$,
where we set $y_{t+1}=\infty$.
Thus,
\begin{eqnarray*}
\nu(x_0)&=&\sum_{i=1}^t(\nu(x_{i-1})-\nu(y_i)+\nu(y_i)-\nu(x_{i}))
+\nu(x_t)-\nu(y_{t+1})\\
&=&
\sum_{i=1}^t(\rank[x_{i-1},y_i]-(\nu(x_i)-\nu(y_{i})))
+\rank[x_t,y_{t+1}]\\
&\leq&
\sum_{i=1}^t(\rank[x_{i-1},y_i]-\rank[x_i,y_i])
+\rank[x_t,y_{t+1}]\\
&=&r(y_1,x_1,\ldots,y_t,x_t)\\
&\leq&\rmax.
\end{eqnarray*}
\end{proof}

Next we define two elements of $\TTTTT(P)$ defined by a sequence of elements
with \condn.

\begin{definition}\rm
\mylabel{def:nuup down}
Let $y_1$, $x_1$, \ldots, $y_t$, $x_t$ be a sequence of elements with \condn.
Set $y_{t+1}=\infty$.
We define
$$
\mudown_{(y_1,x_1,\ldots,y_t,x_t)}(y_i)\define
\sum_{k=i}^t(-\rank[x_k,y_k]+\rank[x_k,y_{k+1}])
$$
for $1\leq i\leq t+1$ and
$$
\nudown_{(y_1,x_1,\ldots,y_t,x_t)}(z)\define
\max\{\rank[z,y_i]+\mudown_{(y_1,x_1,\ldots,y_t,x_t)}(y_i)\mid
z\leq y_i\}
$$
for $z\in P^+$.
We also define
$$
\muup_{(y_1,x_1,\ldots,y_t,x_t)}(x_i)\define
\rmax+\sum_{k=1}^i(-\rank[x_{k-1},y_k]+\rank[x_k,y_{k}])
$$
for $0\leq i\leq t$ and
$$
\nuup_{(y_1,x_1,\ldots,y_t,x_t)}(z)\define
\min\{-\rank[x_i,z]+\muup_{(y_1,x_1,\ldots,y_t,x_t)}(x_i)\mid
x_i\leq z\}
$$
for $z\in P^+$.
\end{definition}
Here we note the following fact.

\begin{lemma}
\mylabel{lem:nuup down in tp}
Let $y_1$, $x_1$, \ldots, $y_t$, $x_t$ be a sequence of elements with \condn.
Then
$\nudown_{(y_1,x_1,\ldots,y_t,x_t)}$, $\nuup_{(y_1,x_1,\ldots,y_t,x_t)}\in\TTTTT(P)$.
\end{lemma}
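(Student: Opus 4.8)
The plan is to verify directly that each of $\nudown$ and $\nuup$ (I suppress the subscript $(y_1,x_1,\ldots,y_t,x_t)$) restricts to an integer-valued, everywhere positive, strictly order-reversing function on $P$, which is precisely membership in $\TTTTT(P)$. The single elementary tool throughout is superadditivity of rank: for $a\leq b\leq c$ in $P^+$ one has $\rank[a,c]\geq\rank[a,b]+\rank[b,c]$, obtained by concatenating longest chains, and in particular $\rank[a,c]\geq 1+\rank[b,c]$ whenever $a<b$. Integrality is automatic in both cases, since each value is a maximum (resp.\ minimum) of integer combinations of ranks.

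For $\nudown$ I would first note that the index $i=t+1$, with $y_{t+1}=\infty$, always competes in the defining maximum because $z\leq\infty$ for every $z\in P^+$; since $\mudown(\infty)=0$ this gives $\nudown(z)\geq\rank[z,\infty]\geq 1$ for $z\in P$, settling positivity (and incidentally $\nudown(\infty)=0$). For the strict reversal, suppose $x<y$ and let $i^*$ attain the maximum defining $\nudown(y)$, so $y\leq y_{i^*}$. Then $x\leq y_{i^*}$, and superadditivity gives $\rank[x,y_{i^*}]\geq\rank[x,y]+\rank[y,y_{i^*}]\geq 1+\rank[y,y_{i^*}]$; feeding the index $i^*$ into the maximum for $x$ yields $\nudown(x)\geq\rank[x,y_{i^*}]+\mudown(y_{i^*})\geq 1+\nudown(y)$, hence $\nudown(x)>\nudown(y)$.

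For $\nuup$ the defining minimum is over a non-empty index set, since $x_0$ is the minimum of $P$ and so $x_0\leq z$ for every $z\in P^+$. The strict reversal is dual to the previous argument: given $x<y$, choose $i^*$ attaining the minimum for $x$, so $x_{i^*}\leq x$; then $x_{i^*}\leq y$ and $\rank[x_{i^*},y]\geq\rank[x_{i^*},x]+1$, whence $\nuup(y)\leq -\rank[x_{i^*},y]+\muup(x_{i^*})\leq\nuup(x)-1$. The one genuinely nonformal point is the positivity of $\nuup$, and this is where the maximality defining $\rmax$ must enter. I would establish it by first showing $\nuup(\infty)\geq 0$: for each $i$ the truncated sequence $y_1,x_1,\ldots,y_i,x_i$ is again a \scn\ (conditions \ref{item:cond n 0}--\ref{item:cond n 2} are inherited by an initial segment), so its value $r(y_1,x_1,\ldots,y_i,x_i)=\sum_{k=1}^i(\rank[x_{k-1},y_k]-\rank[x_k,y_k])+\rank[x_i,\infty]$ is at most $\rmax$. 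Rearranging the definition of $\muup$ yields the identity $\muup(x_i)-\rank[x_i,\infty]=\rmax-r(y_1,x_1,\ldots,y_i,x_i)\geq 0$, i.e.\ every term $-\rank[x_i,\infty]+\muup(x_i)$ of the minimum defining $\nuup(\infty)$ is non-negative. Hence $\nuup(\infty)\geq 0$, and applying the strict reversal with $y=\infty$ gives $\nuup(z)>\nuup(\infty)\geq 0$, so $\nuup(z)\geq 1$, for every $z\in P$.

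I expect the two monotonicity statements to be completely routine once superadditivity of rank is in hand; the main obstacle, and the only step that uses more than the order structure, is the positivity of $\nuup$. The crux there is the small combinatorial observation that every initial truncation of a \scn\ is again a \scn, which converts the defining maximality of $\rmax$ into exactly the lower bound needed on the terms of the minimum.
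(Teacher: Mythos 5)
Your proof is correct and follows essentially the same route as the paper: the strict order-reversal (which the paper leaves as "easily verified") via superadditivity of rank, positivity of $\nudown$ from the always-competing index $y_{t+1}=\infty$, and positivity of $\nuup$ by reducing $\nuup(\infty)\geq 0$ to the maximality of $\rmax$ over the truncated \sscn. The identity $\muup(x_i)-\rank[x_i,\infty]=\rmax-r(y_1,x_1,\ldots,y_i,x_i)$ is exactly the paper's key step.
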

\begin{proof}
Set 
$\nudown=\nudown_{(y_1,x_1,\ldots,y_t,x_t)}$, $\nuup=\nuup_{(y_1,x_1,\ldots,y_t,x_t)}$ 
 and $y_{t+1}=\infty$.
It is easily verified by the definition that if $z_1$, $z_2\in P^+$ and $z_1<z_2$, then
$$
\nudown(z_1)>\nudown(z_2)\quad\mbox{and}\quad
\nuup(z_1)>\nuup(z_2).
$$
Further, since
$\nudown(\infty)=
\max\{\rank[\infty,y_i]+\mudown_{(y_1,x_1,\ldots,y_t,x_t)}(y_i)\mid \infty\leq y_i\}=0$,
we see by the above inequality that $\nudown(z)>0$ for any $z\in P$.

Now we prove that $\nuup(z)>0$ for any $z\in P$.
It is enough to show that
$\nuup(\infty)=
\min\{-\rank[x_i,\infty]+\muup_{(y_1,x_1,\ldots,y_t,x_t)}(x_i)\mid x_i\leq \infty\}\geq 0$.
Assume the contrary and take $i$ 
with $-\rank[x_i,\infty]+\muup_{(y_1,x_1,\ldots, y_t,x_t)}(x_i)<0$.
Then $y_1$, $x_1$, \ldots, $y_i$, $x_i$ is a \scn\ and
$\rmax-r(y_1,x_1,\ldots, y_i,x_i)
=-\rank[x_i,\infty]+\muup_{(y_1,x_1,\ldots, y_t,x_t)}(x_i)<0$.
This contradicts to the definition of $\rmax$.
\end{proof}
We state the following important properties of $\nudown$ and $\nuup$.

\begin{lemma}
\mylabel{lem:nu up down min}
Let $y_1$, $x_1$, \ldots $y_t$, $x_t$ be a sequence of elements in $P$
with \condn.
Suppose that 
$r(y_1,x_1,\ldots, y_t,x_t)=\rmax$.
Then $\nudown_{(y_1, x_1, \ldots, y_t,x_t)}$ and
$\nuup_{(y_1, x_1, \ldots, y_t,x_t)}$ are minimal elements of $\TTTTT(P)$.
Further,
\begin{equation}
\nudown_{(y_1,x_1,\ldots, y_t,x_t)}(y_i)=
\mudown_{(y_1,x_1,\ldots, y_t,x_t)}(y_i),
\mylabel{eq:nu mu y}
\end{equation}
\begin{equation}
\nudown_{(y_1,x_1,\ldots, y_t,x_t)}(x_{i-1})=
\rank[x_{i-1},y_{i}]+\mudown_{(y_1,x_1,\ldots, y_t,x_t)}(y_{i}),
\mylabel{eq:nu mu x}
\end{equation}
\begin{equation}
\nuup_{(y_1,x_1,\ldots, y_t,x_t)}(x_{i-1})=
\muup_{(y_1,x_1,\ldots, y_t,x_t)}(x_{i-1}),
\mylabel{eq:nu mu up x}
\end{equation}
\begin{equation}
\nuup_{(y_1,x_1,\ldots, y_t,x_t)}(y_i)=
-\rank[x_{i-1},y_{i}]+\muup_{(y_1,x_1,\ldots, y_t,x_t)}(x_{i-1})
\mylabel{eq:nu mu up y}
\end{equation}
for $1\leq i\leq t+1$, where we set $y_{t+1}=\infty$.
In particular,
$\nudown_{(y_1, x_1, \ldots, y_t,x_t)}(x_0)=
\nuup_{(y_1, x_1, \ldots, y_t,x_t)}(x_0)=\rmax$.
\end{lemma}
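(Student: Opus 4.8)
The plan is to obtain minimality from the sufficient criterion of Lemma~\ref{lem:min t suf}, applied to the given \scn\ itself. Write $\nudown,\nuup,\mudown,\muup$ for the functions of Definition~\ref{def:nuup down} attached to the sequence $y_1,x_1,\ldots,y_t,x_t$; both $\nudown$ and $\nuup$ lie in $\TTTTT(P)$ by Lemma~\ref{lem:nuup down in tp}. Taking $z_k=y_k$, $w_k=x_k$ and $s=t$ (so $w_0=x_0$, $z_{s+1}=\infty$), the zig-zag $y_1>x_1<\cdots<y_t>x_t$ required by Lemma~\ref{lem:min t suf} is exactly the second condition of Definition~\ref{def:condn}, so it suffices to verify $\nudown(x_i)-\nudown(y_{i+1})=\rank[x_i,y_{i+1}]$ and $\nuup(x_i)-\nuup(y_{i+1})=\rank[x_i,y_{i+1}]$ for $0\le i\le t$; these follow by subtracting \eqref{eq:nu mu y} from \eqref{eq:nu mu x}, and \eqref{eq:nu mu up y} from \eqref{eq:nu mu up x}, after the shift $i\mapsto i+1$. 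Thus the whole statement reduces to the four displayed equations. I will also use the elementary fact that for $\nu\in\TTTTT(P)$ and $a\le b$ in $P^+$ one has $\nu(a)-\nu(b)\ge\rank[a,b]$, obtained by letting $\nu$ drop by at least $1$ along each cover of a longest chain from $a$ to $b$.

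The heart is \eqref{eq:nu mu x} and \eqref{eq:nu mu up y}; the remaining two then come for free. For \eqref{eq:nu mu x} I must show the maximum defining $\nudown(x_{i-1})$ is attained at $j=i$. The $j=i$ term gives the lower bound, so the content is that no admissible $j$ (with $x_{i-1}\le y_j$) beats it. The device is \emph{splicing}: for such a $j$ with $j>i$ set
$$
\sigma_j\define(y_1,x_1,\ldots,y_{i-1},x_{i-1},y_j,x_j,y_{j+1},x_{j+1},\ldots,y_t,x_t).
$$
I claim $\sigma_j$ is again a \scn. Conditions \ref{item:cond n 0} and \ref{item:cond n 1} are inherited (the new covering relation being $x_{i-1}<y_j$), and the only new instances of \ref{item:cond n 2} are ``$y_a\not\ge x_b$'' with $y_a$ from the prefix ($a\le i-1$) and $x_b$ from the suffix ($b\ge j>i$); since then $a<b$ in the original indexing, these hold by the incomparability condition~\ref{item:cond n 2} of the original sequence. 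A direct telescoping gives
$$
r(\sigma_j)=A+\rank[x_{i-1},y_j]+\mudown(y_j),\qquad A\define\sum_{l=1}^{i-1}\bigl(\rank[x_{l-1},y_l]-\rank[x_l,y_l]\bigr),
$$
with $A$ independent of $j$. Since $\sigma_i$ is the original sequence, the hypothesis $r(y_1,x_1,\ldots,y_t,x_t)=\rmax$ gives $A+\rank[x_{i-1},y_i]+\mudown(y_i)=\rmax$, whereas $r(\sigma_j)\le\rmax$ by the definition of $\rmax$. Hence $\rank[x_{i-1},y_j]+\mudown(y_j)\le\rank[x_{i-1},y_i]+\mudown(y_i)$, which is \eqref{eq:nu mu x}. (The neighbouring index $j=i-1$ gives literally the same value, and $j=t+1$, i.e.\ $y_j=\infty$, is covered by the truncation $(y_1,\ldots,x_{i-1})$.) Equation \eqref{eq:nu mu up y} is proved dually, splicing a \emph{prefix} onto the suffix $(y_i,x_i,\ldots,y_t,x_t)$ at the trough $x_j$, the full sequence again realizing the extremal value $\rmax$.

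The main obstacle is precisely the verification that these spliced sequences still satisfy \condn: cutting out the middle block $x_i,\ldots,x_{j-1}$ must not create a forbidden comparability, and this is exactly what the original condition~\ref{item:cond n 2} guarantees, together with getting the bookkeeping right so that the $j$-independent part $A$ cancels. Granting the two primary equations, the other two follow from the monotonicity fact: from $x_{i-1}<y_i$ and \eqref{eq:nu mu x} we get $\nudown(y_i)\le\nudown(x_{i-1})-\rank[x_{i-1},y_i]=\mudown(y_i)$, while the reverse inequality is the $j=i$ term, giving \eqref{eq:nu mu y}; symmetrically \eqref{eq:nu mu up y} with monotonicity yields \eqref{eq:nu mu up x}. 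Finally \eqref{eq:nu mu x} at $i=1$ gives $\nudown(x_0)=\rank[x_0,y_1]+\mudown(y_1)=r(y_1,x_1,\ldots,y_t,x_t)=\rmax$, while \eqref{eq:nu mu up x} at $i=1$ gives $\nuup(x_0)=\muup(x_0)=\rmax$ directly from Definition~\ref{def:nuup down}. Together with Lemma~\ref{lem:min t suf} this proves minimality of $\nudown$ and $\nuup$ and the ``in particular'' claim.
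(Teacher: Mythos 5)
Your proof is correct and rests on the same key device as the paper's: splicing the given sequence at the index in question to produce another \scn\ whose $r$-value would exceed $\rmax$ unless the extremum defining $\nudown$ (resp.\ $\nuup$) is attained at the expected term, with condition \ref{item:cond n 2} guaranteeing both that the spliced sequence is legal and that only indices $j\geq i$ (resp.\ $j\leq i$) enter the max (resp.\ min). The only harmless reorganizations are that you derive \refeq{eq:nu mu y} from \refeq{eq:nu mu x} via the monotonicity of $\nudown$ rather than by a second splicing argument with the rank triangle inequality, and that you handle $\nuup$ by a dual splice instead of passing to the opposite poset as the paper does.
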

\begin{proof}
By Lemma \ref{lem:nuup down in tp}, we see that 
$\nudown_{(y_1, x_1, \ldots, y_t,x_t)}\in\TTTTT(P)$.
First we show \refeq{eq:nu mu y}.
Assume the contrary and 
take $j$ with 
$\nudown_{(y_1,x_1,\ldots, y_t,x_t)}(y_j)\neq
\mudown_{(y_1,x_1,\ldots, y_t,x_t)}(y_j)$.
Then  $j\leq t$,
$\nudown_{(y_1,x_1,\ldots, y_t,x_t)}(y_j)>
\mudown_{(y_1,x_1,\ldots, y_t,x_t)}(y_j)$
and there exists $y_i$ such that $y_j\leq y_i$ and 
$$
\nudown_{(y_1,x_1,\ldots, y_t,x_t)}(y_j)=
\rank[y_j, y_i]+\mudown_{(y_1,x_1,\ldots, y_t,x_t)}(y_i).
$$
Since $y_1$, $x_1$, \ldots, $y_t$, $x_t$ is a \scn,
$y_k\not\geq x_j$ for any $k$ with $1\leq k\leq j-1$.
Thus, $i\geq j$, since $x_j<y_j\leq y_i$.
Since $x_{j-1}<y_j\leq y_i$, we see that 
$y_1$, $x_1$, \ldots, $y_{j-1}$, $x_{j-1}$, $y_i$, $x_i$, \ldots, $y_t$, $x_t$
is a \scn.
Further, since
$$
\rank[x_{j-1},y_i]\geq\rank[x_{j-1},y_j]+\rank[y_j,y_i],
$$
we see that
\begin{eqnarray*}
&&r(y_1,x_1, \ldots, y_{j-1},x_{j-1},y_i,x_i, \ldots,y_t, x_t)\\
&=&\sum_{k=1}^{j-1}(\rank[x_{k-1},y_k]-\rank[x_k,y_k])
+\rank[x_{j-1},y_i]+
\mudown_{(y_1,x_1,\ldots, y_t,x_t)}(y_i)\\
&\geq&
\sum_{k=1}^{j-1}(\rank[x_{k-1},y_k]-\rank[x_k,y_k])
\\&&\quad
+\rank[x_{j-1},y_j]+\rank[y_j,y_i]+
\mudown_{(y_1,x_1,\ldots, y_t,x_t)}(y_i)\\
&=&
\sum_{k=1}^{j-1}(\rank[x_{k-1},y_k]-\rank[x_k,y_k])
+\rank[x_{j-1},y_j]+
\nudown_{(y_1,x_1,\ldots, y_t,x_t)}(y_j)\\
&>&
\sum_{k=1}^{j-1}(\rank[x_{k-1},y_k]-\rank[x_k,y_k])
+\rank[x_{j-1},y_j]+
\mudown_{(y_1,x_1,\ldots, y_t,x_t)}(y_j)\\
&=&
r(y_1,x_1,\ldots,y_t,x_t)\\
&=&
\rmax.
\end{eqnarray*}
This contradicts to the definition of $\rmax$.
Therefore, we see \refeq{eq:nu mu y}.

Next we show \refeq{eq:nu mu x}.
Assume the contrary and take $j$ with
$\nudown_{(y_1,x_1,\ldots, y_t,x_t)}(x_j)\neq
\rank[x_j,y_{j+1}]+\mudown_{(y_1,x_1,\ldots, y_t,x_t)}(y_{j+1})$.
Then 
$\nudown_{(y_1,x_1,\ldots, y_t,x_t)}(x_j)>
\rank[x_j,y_{j+1}]+\mudown_{(y_1,x_1,\ldots, y_t,x_t)}(y_{j+1})$
and there is $y_i$ such that $x_j\leq y_i$ and
$$
\nudown_{(y_1,x_1,\ldots, y_t,x_t)}(x_j)=
\rank[x_j,y_{i}]+\mudown_{(y_1,x_1,\ldots, y_t,x_t)}(y_{i}).
$$
Since $y_1$, $x_1$, \ldots, $y_t$, $x_t$ is a \scn, we see that $i\geq j$.
Moreover, since
\begin{eqnarray*}
&&\rank[x_j,y_{j}]+\mudown_{(y_1,x_1,\ldots, y_t,x_t)}(y_{j})\\
&=&\rank[x_j,y_{j+1}]+\mudown_{(y_1,x_1,\ldots, y_t,x_t)}(y_{j+1})\\
&\neq&\nudown_{(y_1,x_1,\ldots, y_t,x_t)}(x_j),
\end{eqnarray*}
we see that $i\geq j+2$.
Therefore, 
$y_1$, $x_1$, \ldots, $y_j$, $x_j$, $y_i$, $x_i$, \ldots, $y_t$, $x_t$ is a \scn\
and
\begin{eqnarray*}
&&r(y_1,x_1,\ldots,y_j,x_j,y_i,x_i,\ldots, y_t, x_t)\\
&=&\sum_{k=1}^j(\rank[x_{k-1},y_k]-\rank[x_k,y_k])+\rank[x_j,y_i]+
\mudown_{(y_1,x_1, \ldots, y_t,x_t)}(y_i)\\
&=&\sum_{k=1}^j(\rank[x_{k-1},y_k]-\rank[x_k,y_k])+\nudown_{(y_1,x_1,\ldots, y_t,x_t)}(x_j)\\
&>&\sum_{k=1}^j(\rank[x_{k-1},y_k]-\rank[x_k,y_k])+\rank[x_j,y_{j+1}]
+\mudown_{(y_1,x_1,\ldots, y_t,x_t)}(y_{j+1})\\
&=&r(y_1,x_1,\ldots,y_t,x_t)\\
&=&\rmax,
\end{eqnarray*}
contradicting the definition of $\rmax$.
Therefore, we see \refeq{eq:nu mu x}.

By \refeq{eq:nu mu y}, \refeq{eq:nu mu x} and Lemma \ref{lem:min t suf},
we see that $\nudown_{(y_1,x_1, \ldots, y_t, x_t)}$ is a minimal element of
$\TTTTT(P)$.
Further, we see that
$\nudown_{(y_1,x_1,\ldots,y_t,x_t)}(x_0)=r(y_1,x_1,\ldots, y_t, x_t)=\rmax$.

By considering the poset $Q$ whose base set is $P^+$ and 
$$
z<w \mbox{ in $Q$}{\iff}z>w\mbox{ in $P^+$},
$$
we see that $\nuup_{(y_1,x_1, \ldots, y_t, x_t)}$ is also a minimal
element of $\TTTTT(P)$ and 
\refeq{eq:nu mu up x} and \refeq{eq:nu mu up y} hold.
\end{proof}

Since $\deg T^\nu=\nu(x_0)$ for $\nu\in\TTTTT(P)$,
we see by 
Corollaries \ref{cor:cri level} and \ref{cor:nu x0 range} and Lemma \ref{lem:nu up down min},
the following
%
\begin{thm}
\mylabel{thm:level main}
$\RRRRR_K(H)$ is level if and only if 
$r(y_1,x_1,\ldots, y_t,x_t)\leq r$
for any sequence of elements in $P$ with \condn,
i.e.,
$\rmax=r$.
\end{thm}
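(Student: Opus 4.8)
The plan is to read the theorem off almost immediately from the three preparatory results already in hand, namely Corollary~\ref{cor:cri level}, Corollary~\ref{cor:nu x0 range} and Lemma~\ref{lem:nu up down min}. First I would record the trivial observation that $\rmax\geq r$ always holds: the empty sequence is a sequence with condition~N, and for it $r()=\rank[x_0,\infty]=r$. Consequently the two formulations in the statement, namely ``$r(y_1,x_1,\ldots,y_t,x_t)\leq r$ for every sequence with condition~N'' and ``$\rmax=r$'', are literally equivalent, so it suffices to prove that $\RRRRR_K(H)$ is level if and only if $\rmax=r$.

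For the direction $\rmax=r\Rightarrow$ level, I would invoke Corollary~\ref{cor:cri level}, which reduces levelness to the assertion that every minimal element $\nu$ of $\TTTTT(P)$ satisfies $\nu(x_0)=r$. Corollary~\ref{cor:nu x0 range} then supplies exactly the needed squeeze: for any such $\nu$ we have $r\leq\nu(x_0)\leq\rmax$, and under the hypothesis $\rmax=r$ this forces $\nu(x_0)=r$. Hence $\RRRRR_K(H)$ is level.

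For the converse I would argue by contraposition. Suppose $\rmax>r$, and choose a sequence $y_1,x_1,\ldots,y_t,x_t$ with condition~N attaining the maximum, so that $r(y_1,x_1,\ldots,y_t,x_t)=\rmax$. By Lemma~\ref{lem:nu up down min} the function $\nudown_{(y_1,x_1,\ldots,y_t,x_t)}$ is then a minimal element of $\TTTTT(P)$ with $\nudown_{(y_1,x_1,\ldots,y_t,x_t)}(x_0)=\rmax>r$. This exhibits a minimal element of $\TTTTT(P)$ whose value at $x_0$ strictly exceeds $r$, so by Corollary~\ref{cor:cri level} the ring $\RRRRR_K(H)$ fails to be level. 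Combining the two implications yields the theorem.

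I expect no genuine obstacle in this final step: all of the substance has been absorbed into the preceding results, and what remains is only the packaging that threads them through the levelness criterion. The one place where real content is being used is the converse direction, where one must actually produce a \emph{minimal} element of $\TTTTT(P)$ realizing the extremal value $\rmax$ at $x_0$; this is precisely the nontrivial construction carried out in Lemma~\ref{lem:nu up down min} (the explicit $\nudown$, and symmetrically $\nuup$), and without it the converse would be the hard part of the argument. Given that lemma, together with the upper bound of Corollary~\ref{cor:nu x0 range} for the forward direction, the proof is a short two-line deduction in each direction.
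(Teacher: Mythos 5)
Your proof is correct and is essentially identical to the paper's own argument, which likewise deduces the theorem by combining Corollary~\ref{cor:cri level}, Corollary~\ref{cor:nu x0 range} (for the forward squeeze) and Lemma~\ref{lem:nu up down min} (to produce a minimal element of $\TTTTT(P)$ with value $\rmax$ at $x_0$ for the converse). The observation that the empty sequence gives $r()=r$, hence $\rmax\geq r$, also matches the paper's remark.
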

%
As a corollary,
we can reprove our previous result.
\begin{cor}[\cite{miy}]
If $[x,\infty]$ is pure for any $x\in P\setminus\{x_0\}$,
then $\RRRRR_K(H)$ is level.
\end{cor}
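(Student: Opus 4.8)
The plan is to invoke Theorem \ref{thm:level main}, which reduces levelness of $\RRRRR_K(H)$ to the purely combinatorial inequality $r(y_1,x_1,\ldots,y_t,x_t)\le r$ for every \scn. So I would fix an arbitrary \scn\ $y_1,x_1,\ldots,y_t,x_t$ and aim to bound its $r$-value by $r=\rank[x_0,\infty]$. The empty sequence gives $r()=r$ directly, so I may assume $t\ge 1$.

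First I would record that every $x_i$ with $1\le i\le t$ is different from $x_0$: condition (\ref{item:cond n 0}) handles $x_1$, and for $j\ge 2$ condition (\ref{item:cond n 2}) with $i=1$ gives $y_1\not\ge x_j$, which would fail if $x_j$ were the minimum $x_0$. Hence each interval $[x_i,\infty]$ is pure by hypothesis, and I may use additivity of rank through any intermediate element of a pure interval: since $x_i<y_i<\infty$ one has $y_i\in[x_i,\infty]$, and therefore
$$
\rank[x_i,y_i]+\rank[y_i,\infty]=\rank[x_i,\infty].
$$
(This is the standard fact that rank is additive through any element of a pure poset, proved by concatenating maximal chains of $[x_i,y_i]$ and $[y_i,\infty]$.) Applying this to the two elements $x_{i-1},x_i<y_i$ for $i\ge2$ — both being $\ne x_0$ — yields the telescoping identity
$$
\rank[x_{i-1},y_i]-\rank[x_i,y_i]=\rank[x_{i-1},\infty]-\rank[x_i,\infty].
$$

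The one term that does not simplify exactly is $i=1$, because $[x_0,\infty]=P^+$ need not be pure; this is the main (though mild) obstacle. There I would use only the general chain inequality $\rank[x_0,y_1]+\rank[y_1,\infty]\le\rank[x_0,\infty]=r$ (again obtained by concatenating maximal chains), together with purity of $[x_1,\infty]$, to deduce
$$
\rank[x_0,y_1]-\rank[x_1,y_1]\le r-\rank[x_1,\infty].
$$
Summing this $i=1$ bound with the telescoping identities for $2\le i\le t$ collapses the sum to $r-\rank[x_t,\infty]$, and adding the final summand $\rank[x_t,\infty]$ gives $r(y_1,x_1,\ldots,y_t,x_t)\le r$. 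Since the \scn\ was arbitrary, $\rmax=r$, and Theorem \ref{thm:level main} then shows that $\RRRRR_K(H)$ is level.
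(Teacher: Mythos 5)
Your proposal is correct and follows essentially the same route as the paper: both reduce to Theorem \ref{thm:level main}, use purity of $[x_i,\infty]$ to get $\rank[x_i,y_i]=\rank[x_i,\infty]-\rank[y_i,\infty]$ (valid since each $x_i\neq x_0$), telescope the sum, and finish with the chain inequality $\rank[x_0,y_1]+\rank[y_1,\infty]\leq r$. The paper merely collapses the whole sum to $\rank[x_0,y_1]+\rank[y_1,\infty]$ in one step, whereas you isolate the $i=1$ term explicitly; the content is identical.
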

\begin{proof}
By assumption, 
\[
\rank[x,y]=\rank[x,\infty]-\rank[y,\infty]
\]
for any
$x$, $y\in P\setminus \{x_0\}$ with $x\leq y$.
Therefore,  for any sequence
$y_1$, $x_1$, \ldots, $y_t$, $x_t$ with \condn, 
\begin{eqnarray*}
&&r(y_1,x_1, \ldots, y_t,x_t)\\
&=&\sum_{i=1}^t(\rank[x_{i-1},y_i]-\rank[x_i,y_i])+\rank[x_t,\infty]\\
&=&\rank[x_0,y_1]+\rank[y_1,\infty]\\
&\leq&r.
\end{eqnarray*}
\end{proof}

Next we show that for any integer $d$ with $r\leq d\leq \rmax$,
there is a generator of the canonical module of $\RRRRR_K(H)$ with degree $d$.
First we state the following

\begin{lemma}
\mylabel{lem:min t red}
Let $\nu$ be a minimal element of $\TTTTT(P)$ and $k$ a positive integer.
Set
$
\nu_k(x)=\max\{\nu(x)-k,\rank[x,\infty]\}
$
for $x\in P^+$.
Then $\nu_k$ is a minimal element of $\TTTTT(P)$.
\end{lemma}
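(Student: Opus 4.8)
The plan is to verify directly that $\nu_k\in\TTTTT(P)$ and then to manufacture a sequence to which Lemma \ref{lem:min t suf} applies, the point being that the sequence witnessing minimality of $\nu$ must be \emph{truncated} before it can serve $\nu_k$.

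\emph{Membership.} First I would check $\nu_k\in\TTTTT(P)$. Since $k>0$, one has $\nu_k(\infty)=\max\{-k,0\}=0$, while for $x\in P$ the interval $[x,\infty]$ has length at least $1$, so $\nu_k(x)\geq\rank[x,\infty]\geq 1>0$. For the strict order-reversal, if $x<y$ in $P^+$ then $\nu(x)-k>\nu(y)-k$, and since $\rank[x,\infty]\geq\rank[x,y]+\rank[y,\infty]>\rank[y,\infty]$ also $\rank[x,\infty]>\rank[y,\infty]$; taking maxima termwise gives $\nu_k(x)>\nu_k(y)$. Hence $\nu_k\in\TTTTT(P)$.

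\emph{Minimality.} For minimality I would invoke Lemma \ref{lem:min t suf}. If $\nu(x_0)-k\leq r$, then $\nu_k(x_0)=\rank[x_0,\infty]=r$ and the case $s=0$ of Lemma \ref{lem:min t suf} already shows $\nu_k$ is minimal. So assume $\nu(x_0)-k>r$, i.e.\ $\nu_k(x_0)=\nu(x_0)-k$. By Lemma \ref{lem:min t nec} choose a sequence $y_1,x_1,\ldots,y_t,x_t$ with \condn\ such that $\nu(x_i)-\nu(y_{i+1})=\rank[x_i,y_{i+1}]$ for $0\leq i\leq t$, where $y_{t+1}=\infty$. Write $e(z)\define\nu(z)-k-\rank[z,\infty]$ for $z\in P^+$, so that $\nu_k(z)=\nu(z)-k$ when $e(z)\geq0$ and $\nu_k(z)=\rank[z,\infty]$ when $e(z)\leq0$. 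The decisive computation is that $e$ does not decrease along the prescribed up-steps: using the equality for $\nu$ together with $\rank[x_i,\infty]\geq\rank[x_i,y_{i+1}]+\rank[y_{i+1},\infty]$,
\begin{equation*}
e(x_i)-e(y_{i+1})=\rank[x_i,y_{i+1}]-\bigl(\rank[x_i,\infty]-\rank[y_{i+1},\infty]\bigr)\leq0 .
\end{equation*}
Now $e(x_0)=\nu(x_0)-k-r>0$, whereas the equality at $i=t$ gives $\nu(x_t)=\rank[x_t,\infty]$ and hence $e(x_t)=-k<0$. Thus I may set $j\define\min\{i\mid e(x_i)\leq0\}$, so that $1\leq j\leq t$ and $e(x_i)>0$ for $0\leq i<j$. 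For such $i$ the displayed inequality yields $e(y_{i+1})\geq e(x_i)>0$, so $\nu_k(x_i)=\nu(x_i)-k$ and $\nu_k(y_{i+1})=\nu(y_{i+1})-k$, whence $\nu_k(x_i)-\nu_k(y_{i+1})=\rank[x_i,y_{i+1}]$; and for $i=j$ the inequality $e(x_j)\leq0$ gives $\nu_k(x_j)=\rank[x_j,\infty]=\nu_k(x_j)-\nu_k(\infty)$. Therefore the truncated sequence $y_1,x_1,\ldots,y_j,x_j$ (whose zigzag is a prefix of the original one) meets both hypotheses of Lemma \ref{lem:min t suf} for $\nu_k$, and $\nu_k$ is a minimal element of $\TTTTT(P)$.

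The main obstacle is that the full sequence for $\nu$ need not work for $\nu_k$: on the down-steps $y_i>x_i$ the quantity $e$ can oscillate, and exactly at the index where $\nu_k$ first ``bottoms out'' at $\rank[\,\cdot\,,\infty]$ the difference $\nu_k(x_i)-\nu_k(y_{i+1})$ may strictly exceed $\rank[x_i,y_{i+1}]$, so the required rank equality is lost. The remedy, and the crux of the argument, is to truncate precisely at the first index $j$ with $e(x_j)\leq0$: the up-step monotonicity of $e$ keeps everything before $j$ inside the region $\nu_k=\nu-k$, where the original equalities survive verbatim, while $x_j$ itself supplies exactly the terminal equality $\nu_k(x_j)=\rank[x_j,\infty]$ that closes the sequence.
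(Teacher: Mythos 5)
Your proof is correct and takes essentially the same route as the paper: both verify $\nu_k\in\TTTTT(P)$, take the sequence supplied by Lemma \ref{lem:min t nec} for $\nu$, truncate it at the first index $j$ where $\nu_k(x_j)$ drops to $\rank[x_j,\infty]$, and apply Lemma \ref{lem:min t suf} to the truncated prefix. The only cosmetic difference is that the paper defines $j$ as the first index where the rank equality fails for $\nu_k$ and then derives $\nu_k(x_j)=\rank[x_j,\infty]$ by contradiction, whereas you track the excess $e(z)=\nu(z)-k-\rank[z,\infty]$ directly.
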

\begin{proof}
It is clear  that $\nu_k\in\TTTTT(P)$.
By Lemma \ref{lem:min t nec}, we see that there is a sequence
$y_1$, $x_1$, \ldots, $y_t$, $x_t$ of elements of $P$ with \condn\ such that
$$
\nu(x_i)-\nu(y_{i+1})=\rank[x_i,y_{i+1}]
$$
for $0\leq i\leq t$, where we set $y_{t+1}=\infty$.

First consider the case where
$
\nu_k(x_i)-\nu_k(y_{i+1})=\rank[x_i,y_{i+1}]
$
for $0\leq i\leq t$.
Then, by Lemma \ref{lem:min t suf}, we see that $\nu_k$ is a minimal element of $\TTTTT(P)$.
Next consider the case that there exists $i$ with
$
\nu_k(x_i)-\nu_k(y_{i+1})\neq\rank[x_i,y_{i+1}]
$.
Set
$$
j=\min\{i\mid
\nu_k(x_i)-\nu_k(y_{i+1})>\rank[x_i,y_{i+1}]\}.
$$
If $\nu_k(x_j)=\nu(x_j)-k$, then
\begin{eqnarray*}
&&
\nu_k(x_j)-\nu_k(y_{j+1})\\
&=&
\nu(x_j)-k-\max\{\nu(y_{j+1})-k,\rank[y_{j+1},\infty]\}\\
&\leq&
\nu(x_j)-\nu(y_{j+1})\\
&=&
\rank[x_j,y_{j+1}].
\end{eqnarray*}
This contradicts to the definition of $j$.

Thus, $\nu_k(x_j)=\rank[x_j,\infty]$.
Since 
$\nu_k(x_i)-\nu_k(y_{i+1})=\rank[x_i,y_{i+1}]$ for $0\leq i\leq j-1$
by the definition of $j$, by applying Lemma \ref{lem:min t suf} to
$y_1$, $x_1$, \ldots, $y_j$, $x_j$, we see that $\nu_k$ is a minimal element
of $\TTTTT(P)$.
\end{proof}
As a corollary, we see the following fact.

\begin{thm}
\mylabel{prop:gen deg}
Let $d$ be an integer with $r\leq d\leq \rmax$.
Then there exists a generator of the canonical module of 
$\RRRRR_K(H)$ with degree $d$.
\end{thm}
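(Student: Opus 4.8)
The plan is to realize every integer $d$ with $r\le d\le\rmax$ as the degree $\nu(x_0)$ of some minimal element $\nu$ of $\TTTTT(P)$; once this is done, Corollary \ref{cor:can gen} immediately yields that $T^\nu$ is a generator of the canonical module of $\RRRRR_K(H)$ with $\deg T^\nu=\nu(x_0)=d$, as desired.

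First I would manufacture a minimal element of the top degree $\rmax$. By Lemma \ref{lem:scn finite} there are only finitely many \sscn, so the maximum defining $\rmax$ is actually attained: there is a \scn\ $y_1,x_1,\ldots,y_t,x_t$ with $r(y_1,x_1,\ldots,y_t,x_t)=\rmax$. Lemma \ref{lem:nu up down min} then provides a minimal element $\nu\define\nudown_{(y_1,x_1,\ldots,y_t,x_t)}$ of $\TTTTT(P)$ satisfying $\nu(x_0)=\rmax$. This settles the case $d=\rmax$.

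For $r\le d<\rmax$ I would descend from $\nu$ using the truncation of Lemma \ref{lem:min t red}. Put $k\define\rmax-d$, a positive integer, and let $\nu_k(x)=\max\{\nu(x)-k,\rank[x,\infty]\}$ for $x\in P^+$. By that lemma $\nu_k$ is again a minimal element of $\TTTTT(P)$. Evaluating at $x_0$ and using $\rank[x_0,\infty]=\rank P^+=r$ together with $d\ge r$ gives
$$
\nu_k(x_0)=\max\{\rmax-k,\;r\}=\max\{d,\,r\}=d.
$$
Thus $\nu_k$ is a minimal element of $\TTTTT(P)$ of degree exactly $d$, and Corollary \ref{cor:can gen} finishes the argument.

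I do not expect a serious obstacle, since all the substance is carried by the earlier lemmas, and the range constraint $r\le\nu(x_0)\le\rmax$ of Corollary \ref{cor:nu x0 range} confirms that no degrees outside $[r,\rmax]$ could have been demanded in the first place. The only points requiring care are minor: the positivity of $k$ forces me to treat $d=\rmax$ separately (there the starting element $\nu$ already has the right degree), and I must record that the truncation floor $\rank[x_0,\infty]$ equals $r$, so that the maximum in $\nu_k(x_0)$ lands precisely on $d$ rather than overshooting.
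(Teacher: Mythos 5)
Your proposal is correct and follows essentially the same route as the paper: take $\nudown$ of a sequence attaining $\rmax$, then truncate via Lemma \ref{lem:min t red} with $k=\rmax-d$ to hit degree $d$. The only (harmless) difference is that you split off the case $d=\rmax$ to respect the positivity hypothesis on $k$, which the paper glosses over.
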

\begin{proof}
Set $k=\rmax-d$.
Take a sequence $y_1$, $x_1$, \ldots, $y_t$, $x_t$ with \condn\
such that $r(y_1,x_1,\ldots, y_t,x_t)=\rmax$ and put
$$
\nu(z)=\max\{\nudown_{(y_1,x_1,\ldots,y_t,x_t)}(z)-k,\rank[z,\infty]\}
$$
for $z\in P$.
Then $\nu(x_0)=\max\{\nudown_{(y_1,x_1,\ldots,y_t,x_t)}(x_0)-k,r\}
=\max\{\rmax-k,r\}=\max\{d,r\}=d$.
Since $\nu$ is a minimal element of $\TTTTT(P)$ by Lemmas \ref{lem:min t red}
and \ref{lem:nu up down min}, we see by Corollary \ref{cor:can gen}
that $T^\nu$ is a generator of the canonical module of $\RRRRR_K(H)$ with degree $d$.
\end{proof}


Finally in this section, we make a remark on $P$ when $\RRRRR_K(H)$ is level.
First we state the following

\begin{lemma}
\mylabel{lem:non float sum}
Set 
$
F=\{x\in P\mid \rank[x_0,x]_{P^+}+\rank[x,\infty]_{P^+}<r\}
$.
Suppose that $\RRRRR_K(H)$ is level.
Then for any $z_1$, $z_2\in P^+\setminus F$ with $z_1\leq z_2$, 
\begin{equation}
\rank[x_0,z_1]_{P^+}+\rank[z_1,z_2]_{P^+}+\rank[z_2,\infty]_{P^+}=r.
\mylabel{eq:3 sum r}
\end{equation}
and
\begin{equation}
\rank[z_1,z_2]_{P^+\setminus F}=\rank[z_1,z_2]_{P^+}.
\mylabel{eq:-f rank}
\end{equation}
\end{lemma}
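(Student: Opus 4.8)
The plan is to reduce everything to the levelness criterion $\rmax=r$ supplied by Theorem \ref{thm:level main}, together with the elementary observation that for any $z\in P^+$ one has $\rank[x_0,z]_{P^+}+\rank[z,\infty]_{P^+}\leq r$, with equality precisely when $z\notin F$ (note that $x_0\notin F$ because $0+r\not<r$, and $\infty\notin F$ because $F\subseteq P$). More generally, concatenating maximal chains gives
\[
\rank[x_0,z_1]_{P^+}+\rank[z_1,z_2]_{P^+}+\rank[z_2,\infty]_{P^+}\leq r
\]
for all $z_1\leq z_2$ in $P^+$, so to prove \refeq{eq:3 sum r} I only need the reverse inequality $\geq r$.

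First I would dispose of the degenerate cases. If $z_1=x_0$, if $z_2=\infty$, or if $z_1=z_2$, then \refeq{eq:3 sum r} collapses to $\rank[x_0,z_j]_{P^+}+\rank[z_j,\infty]_{P^+}=r$ for the relevant $j$, which is exactly the hypothesis $z_1,z_2\notin F$. The substantive case is $x_0\neq z_1<z_2\neq\infty$ with $z_1,z_2\in P$. Here the two-term sequence $y_1=z_2$, $x_1=z_1$ satisfies the \condn, since $x_1=z_1\neq x_0$ and $y_1=z_2>z_1=x_1$. As $\RRRRR_K(H)$ is level, Theorem \ref{thm:level main} gives $\rmax=r$, whence $r(z_2,z_1)\leq r$, i.e.
\[
\rank[x_0,z_2]_{P^+}-\rank[z_1,z_2]_{P^+}+\rank[z_1,\infty]_{P^+}\leq r.
\]
Substituting $\rank[x_0,z_2]_{P^+}=r-\rank[z_2,\infty]_{P^+}$ (valid because $z_2\notin F$) yields $\rank[z_1,\infty]_{P^+}\leq\rank[z_1,z_2]_{P^+}+\rank[z_2,\infty]_{P^+}$; the reverse is concatenation, so these are equal. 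Adding $\rank[x_0,z_1]_{P^+}$ and invoking $z_1\notin F$ then delivers \refeq{eq:3 sum r}.

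For \refeq{eq:-f rank} the inequality $\rank[z_1,z_2]_{P^+\setminus F}\leq\rank[z_1,z_2]_{P^+}$ is immediate, since a chain in the induced subposet $P^+\setminus F$ is already a chain in $P^+$. For the reverse I would take a chain $C$ in $[z_1,z_2]_{P^+}$ realizing $\rank[z_1,z_2]_{P^+}$ and splice onto it chains of $[x_0,z_1]_{P^+}$ and $[z_2,\infty]_{P^+}$ realizing their ranks. By \refeq{eq:3 sum r} the resulting chain from $x_0$ to $\infty$ has length $r=\rank P^+$, so it is a maximal, hence saturated, chain of $P^+$. Every element $w$ on such a chain satisfies $\rank[x_0,w]_{P^+}+\rank[w,\infty]_{P^+}\geq r$, and therefore $=r$, so $w\notin F$. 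In particular all of $C$ lies in $P^+\setminus F$, exhibiting a chain of length $\rank[z_1,z_2]_{P^+}$ inside $[z_1,z_2]_{P^+\setminus F}$ and giving the reverse inequality.

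The main obstacle is the substantive case of \refeq{eq:3 sum r}: recognizing that the single \condn\ configuration $y_1=z_2$, $x_1=z_1$ is the right object to feed into $\rmax=r$, and that the inequality it produces is exactly complementary to the trivial concatenation inequality, so that equality is forced. Once \refeq{eq:3 sum r} is established, \refeq{eq:-f rank} is a routine chain-extension argument resting on the observation that no element of a length-$r$ maximal chain of $P^+$ can belong to $F$.
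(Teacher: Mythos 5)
Your proof is correct and follows essentially the same route as the paper: equation \refeq{eq:3 sum r} is obtained by feeding the \condn\ sequence $z_2$, $z_1$ into Theorem \ref{thm:level main} and combining the resulting inequality with the defining property of $F$ (the paper substitutes both rank identities at once, you substitute one and then add the other, which is the same algebra), and \refeq{eq:-f rank} is obtained by showing every element of a rank-realizing chain of $[z_1,z_2]_{P^+}$ lies outside $F$ via \refeq{eq:3 sum r}. No gaps.
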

\begin{proof}
We first show \refeq{eq:3 sum r}.
The cases where $z_1=x_0$, $z_2=\infty$ or $z_1=z_2$ are clear from the definition
of $F$.
Suppose that $z_1$, $z_2\in P\setminus F$ and $x_0< z_1<z_2$.
Then $z_2$, $z_1$ is a \scn.
Since $\RRRRR_K(H)$ is level, we see by Theorem \ref{thm:level main} that
$$
\rank[x_0,z_2]_{P^+}-\rank[z_1,z_2]_{P^+}+\rank[z_1,\infty]_{P^+}\leq r.
$$
Since $z_1$, $z_2\not\in F$, we see that 
$
\rank[x_0,z_2]_{P^+}=r-\rank[z_2,\infty]_{P^+}
$ and 
$
\rank[z_1,\infty]_{P^+}=r-\rank[x_0,z_1]_{P^+}
$.
Thus, 
$$
2r-\rank[x_0,z_1]_{P^+}-\rank[z_1,z_2]_{P^+}-\rank[z_2,\infty]_{P^+}\leq r,
$$
i.e.,
$$
\rank[x_0,z_1]_{P^+}+\rank[z_1,z_2]_{P^+}+\rank[z_2,\infty]_{P^+}\geq r.
$$
The opposite inequality is obvious.
Thus, we see \refeq{eq:3 sum r}.

Next we prove \refeq{eq:-f rank}.
Assume the contrary.
Take a maximal chain 
$$
z_1=w_0<w_1<\cdots<w_t=z_2
$$
of $[z_1,z_2]_{P^+}$ such that $t=\rank[z_1,z_2]_{P^+}$.
Since
\begin{eqnarray*}
&&\rank[x_0,w_i]_{P^+}+\rank[w_i,\infty]_{P^+}\\
&\geq&
\rank[x_0,z_1]_{P^+}+\rank[z_1,w_i]_{P^+}+\rank[w_i,z_2]_{P^+}+\rank[z_2,\infty]_{P^+}\\
&=&
\rank[x_0,z_1]_{P^+}+\rank[z_1,z_2]_{P^+}+\rank[z_2,\infty]_{P^+}\\
&=&r,
\end{eqnarray*}
we see that $w_i\not\in F$ for any $i=1$, \ldots, $t-1$.
Therefore,
$\rank[z_1,z_2]_{P^+\setminus F}\geq t=\rank[z_1,z_2]_{P^+}$.
The opposite inequality is obvious.
\end{proof}
By the above lemma and Lemma \ref{lem:3 sum pure}, we see the following fact.


\begin{prop}
\mylabel{prop:float del}
Let $F$ be as in Lemma \ref{lem:non float sum}.
If $\RRRRR_K(H)$ is level, then $P^+\setminus F$ is pure of rank $r$.
In particular, if $\RRRRR_K(H)$ is level and $F=\emptyset$, then 
$\RRRRR_K(H)$ is \gor.
\end{prop}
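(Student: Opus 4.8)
The plan is to apply Lemma \ref{lem:3 sum pure} to the poset $Q\define P^+\setminus F$ with $s=r$, feeding it the rank identity supplied by the two conclusions of Lemma \ref{lem:non float sum}. First I would check that $x_0$ and $\infty$ both belong to $Q$: since $\rank[x_0,x_0]_{P^+}+\rank[x_0,\infty]_{P^+}=0+r=r$ we have $x_0\not\in F$, and because $F\subset P$ we trivially have $\infty\not\in F$. As $x_0$ is the minimum and $\infty$ the maximum of $P^+$, and neither is deleted, $x_0$ remains the unique minimal element and $\infty$ the unique maximal element of $Q$. Hence any chain $z_0\leq z_1\leq z_2\leq z_3$ in $Q$ with $z_0$ minimal and $z_3$ maximal necessarily has $z_0=x_0$ and $z_3=\infty$.

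Next, for such a chain I would rewrite the three ranks taken in $Q$ as ranks taken in $P^+$. Applying \refeq{eq:-f rank} to each of the pairs $(x_0,z_1)$, $(z_1,z_2)$, $(z_2,\infty)$ in $P^+\setminus F$ gives $\rank[x_0,z_1]_{Q}=\rank[x_0,z_1]_{P^+}$, $\rank[z_1,z_2]_{Q}=\rank[z_1,z_2]_{P^+}$ and $\rank[z_2,\infty]_{Q}=\rank[z_2,\infty]_{P^+}$. Then \refeq{eq:3 sum r}, applied to $z_1,z_2\in P^+\setminus F$, yields
\[
\rank[x_0,z_1]_{Q}+\rank[z_1,z_2]_{Q}+\rank[z_2,\infty]_{Q}
=\rank[x_0,z_1]_{P^+}+\rank[z_1,z_2]_{P^+}+\rank[z_2,\infty]_{P^+}=r.
\]
This is exactly the hypothesis of Lemma \ref{lem:3 sum pure} with $s=r$, so that lemma gives that $Q=P^+\setminus F$ is pure of rank $r$.

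For the final assertion, if in addition $F=\emptyset$ then $Q=P^+$ is pure of rank $r$. Every maximal chain of $P^+$ is obtained from a maximal chain of $P$ by appending $\infty$ on top, so $P^+$ is pure of rank $r$ exactly when $P$ is pure (of rank $r-1$); thus $P$ is pure and $\RRRRR_K(H)$ is \gor\ by Theorem \ref{thm:gor case}. I do not expect a genuine obstacle here: the only points demanding care are the verification that the unique extremal elements of $Q$ are precisely $x_0$ and $\infty$ (so that Lemma \ref{lem:3 sum pure} can be invoked with these as $z_0$ and $z_3$) and the faithful translation of ranks between $Q$ and $P^+$ via \refeq{eq:-f rank}; after that the result is a direct substitution.
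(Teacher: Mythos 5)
Your proof is correct and follows exactly the route the paper intends: it verifies the hypothesis of Lemma \ref{lem:3 sum pure} for $Q=P^+\setminus F$ with $s=r$ by combining \refeq{eq:3 sum r} and \refeq{eq:-f rank} from Lemma \ref{lem:non float sum}, which is precisely the paper's (unwritten-in-detail) argument. The extra checks you make --- that $x_0$ and $\infty$ survive the deletion of $F$ and remain the unique extremal elements of $Q$, and the rank translation between $Q$ and $P^+$ --- are exactly the details the paper leaves to the reader.
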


\begin{example}\rm
\mylabel{ex:float del pure}
Let 
\[
P_1=
\vcenter{\parindent0pt\hsize=40\unitlength
\begin{picture}(50,70)
\put(10,20){\circle*{2}}
\put(10,30){\circle*{2}}
\put(10,40){\circle*{2}}
\put(10,50){\circle*{2}}
\put(10,60){\circle*{2}}

\put(30,20){\circle*{2}}
\put(30,30){\circle*{2}}
\put(30,40){\circle*{2}}
\put(30,50){\circle*{2}}
\put(30,60){\circle*{2}}

\put(20,30){\circle*{2}}

\put(20,10){\circle*{2}}

\put(10,20){\line(0,1){40}}
\put(30,20){\line(0,1){40}}
\put(20,10){\line(0,1){20}}
\put(10,60){\line(1,-3){10}}
\put(20,30){\line(1,1){10}}

\put(10,20){\line(1,-1){10}}
\put(20,10){\line(1,1){10}}
\end{picture}
}
\quad\mbox{and}\quad
P_2=
\vcenter{\parindent0pt\hsize=40\unitlength
\begin{picture}(60,70)
\put(10,20){\circle*{2}}
\put(10,30){\circle*{2}}
\put(10,40){\circle*{2}}
\put(10,50){\circle*{2}}
\put(10,60){\circle*{2}}

\put(30,20){\circle*{2}}
\put(30,30){\circle*{2}}
\put(30,40){\circle*{2}}
\put(30,50){\circle*{2}}
\put(30,60){\circle*{2}}

\put(40,20){\circle*{2}}
\put(40,30){\circle*{2}}
\put(40,40){\circle*{2}}
\put(40,50){\circle*{2}}
\put(40,60){\circle*{2}}

\put(20,30){\circle*{2}}

\put(20,10){\circle*{2}}

\put(10,20){\line(0,1){40}}
\put(30,20){\line(0,1){40}}
\put(40,20){\line(0,1){40}}
\put(20,10){\line(0,1){20}}
\put(10,60){\line(1,-3){10}}
\put(20,30){\line(1,2){10}}
\put(20,30){\line(2,1){20}}

\put(10,20){\line(1,-1){10}}
\put(20,10){\line(1,1){10}}
\put(20,10){\line(2,1){20}}
\end{picture}
}
\quad.
\]
Then $\rank P_i^+=6$ for $i=1$, $2$.
Let $F_i$ be the subset of $P_i$ defined as in Lemma \ref{lem:non float sum} 
and let $H_i$ be the distributive lattice corresponding to
$P_i$ for $i=1$, $2$.
Then $P_i^+\setminus F_i$ is pure but $\RRRRR_K(H_i)$ is not level for $i=1$, $2$.
Thus, the converse of Proposition \ref{prop:float del} does not hold.
There are 2 generators with degree 6, 3 generators with degree 7 
and 6 generators with degree 8 of the 
canonical module of $\RRRRR_K(H_1)$
and
there are 2 generators with degree 6, 48 generators with degree 7 
and 108 generators with degree 8 of the 
canonical module of $\RRRRR_K(H_2)$.
\end{example}


\section{Characterization of level type 2 Hibi rings}
\mylabel{sec:level type 2}

As Example \ref{ex:float del pure} shows, it is very hard to describe \cm\ type
of the Hibi ring in terms of the combinatorial structure of $P$.
However, we can characterize 
Hibi ring 
$\RRRRR_K(H)$ to be of type 2 with respect to the combinatorial property of $P$.
Recall that, by Corollary \ref{cor:can gen}, we see that $\type\RRRRR_K(H)$ is 
the number of minimal elements of $\TTTTT(P)$.

In this section, we state a characterization of a Hibi ring to be level and of type 2.
First we make the following


\begin{nndefinition}\rm
Let 
$y_1$, $x_1$, $y_2$, $x_2$, \ldots, $y_t$, $x_t$ be a sequence of elements in $P$.
If the following 4 conditions are satisfied, we say that
$y_1$, $x_1$, $y_2$, $x_2$, \ldots, $y_t$, $x_t$ is an \shseq.
\begin{enumerate}
\item
$y_1$, $x_1$, $y_2$, $x_2$, \ldots, $y_t$, $x_t$ satisfies \condn.
\item
$r(y_1,x_1,\ldots,y_t,x_t)=\rmax$.
\item
If
$y'_1$, $x'_1$, $y'_2$, $x'_2$, \ldots, $y'_{t'}$, $x'_{t'}$ is a
sequence of elements in $P$ with \condn\  and
$r(y'_1,x'_1,\ldots,y'_{t'},x'_{t'})=\rmax$, 
then $t\leq t'$.
\item
For any $i$ with $1\leq i\leq t$,
$\rank[z,y_{i+1}]-\rank[z,y_i]
<\rank[x_i,y_{i+1}]-\rank[x_i,y_i]$
for any $z\in (x_i,y_{i+1}]\cap (x_i,y_i]$
and
$\rank[x_{i-1},z]-\rank[x_i,z]
<\rank[x_{i-1},y_{i}]-\rank[x_i,y_i]$
for any $z\in [x_{i-1},y_{i})\cap [x_i,y_i)$,
where we set $y_{t+1}=\infty$.
\end{enumerate}
\end{nndefinition}
It is clear that there exists an \irseq.
Further, 
by Theorem \ref{thm:level main},
$\RRRRR_K(H)$ is level if and only if the empty sequence is an \shseq.

Next we state the following

\begin{lemma}
\mylabel{lem:number float}
Set
$F=\{x\in P\mid \rank[x_0,x]+\rank[x,\infty]<r\}$.
Then the number of generators of the canonical module of degree $r$ is
greater than $\#F$.
In particular, $\type\RRRRR_K(H)>\#F$.
\end{lemma}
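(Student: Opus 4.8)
The plan is to exhibit $\#F+1$ pairwise distinct minimal elements of $\TTTTT(P)$ of degree $r$. Since a generator of the canonical module of degree $r$ is exactly a minimal element $\nu$ of $\TTTTT(P)$ with $\nu(x_0)=r$ (recall $\deg T^\nu=\nu(x_0)$ and Corollary~\ref{cor:can gen}), this gives the first assertion at once, and the second follows because, by Corollary~\ref{cor:can gen}, $\type\RRRRR_K(H)$ is the total number of minimal elements of $\TTTTT(P)$, which is at least the number of those of degree $r$. One such minimal element is the base point $\nu_0(z)=\rank[z,\infty]$. For each $x\in F$ I would introduce
$$
\nu_x(z)\define\max\{\rank[z,\infty],\ \rank[z,x]+\rank[x,\infty]+1\}\quad(z\le x),\qquad \nu_x(z)\define\rank[z,\infty]\quad(z\not\le x),
$$
the second alternative being present only below $x$, and then show each $\nu_x$ is a minimal element of degree $r$.

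First I would check $\nu_x\in\TTTTT(P)$: it is positive since $\nu_x(z)\ge\rank[z,\infty]>0$ for $z\in P$, and strictly decreasing because for $z<w$ both defining terms strictly decrease (for the second, one uses $\rank[z,x]>\rank[w,x]$ whenever $z<w\le x$). Using $x\in F$, i.e.\ $\rank[x_0,x]+\rank[x,\infty]\le r-1$, one gets $\nu_x(x_0)=\max\{r,\ \rank[x_0,x]+\rank[x,\infty]+1\}=r$, so $\nu_x$ has degree $r$. For distinctness I would look at the bump set $B_\mu\define\{z\in P^+\mid\mu(z)>\rank[z,\infty]\}$: it is empty for $\mu=\nu_0$, whereas $B_{\nu_x}\subseteq\{z\mid z\le x\}$ and contains $x$, so $x$ is the unique maximal element of $B_{\nu_x}$ and is recovered from $\nu_x$. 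Hence $x\mapsto\nu_x$ is injective and never equals $\nu_0$, giving $\#F+1$ distinct candidates.

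The main obstacle is minimality of $\nu_x$, and this is where the hypothesis $x\in F$ is essential. Suppose $\nu'\in\TTTTT(P)$ with $\nu'\le\nu_x$, i.e.\ $\nu_x-\nu'\in\overline\TTTTT(P)$. As in the proof of Corollary~\ref{cor:nu x0 range}, strict monotonicity forces $\nu'(z)\ge\rank[z,\infty]$ for every $z$, and in particular $\nu'(x_0)\ge r$. Since $\nu_x-\nu'$ has values in $\NNN$, we have $\nu'(x)\le\nu_x(x)=\rank[x,\infty]+1$, so $\nu'(x)\in\{\rank[x,\infty],\ \rank[x,\infty]+1\}$. If $\nu'(x)=\rank[x,\infty]$, then $(\nu_x-\nu')(x)=1$; as $\nu_x-\nu'$ is order-reversing and $x_0\le x$, we get $(\nu_x-\nu')(x_0)\ge1$, hence $\nu'(x_0)\le\nu_x(x_0)-1=r-1$, contradicting $\nu'(x_0)\ge r$. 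Therefore $\nu'(x)=\rank[x,\infty]+1=\nu_x(x)$, and then for $z\le x$ strict monotonicity gives $\nu'(z)\ge\nu'(x)+\rank[z,x]\ge\nu_x(z)$, while for $z\not\le x$ we have $\nu'(z)\ge\rank[z,\infty]=\nu_x(z)$. Thus $\nu'\ge\nu_x$ pointwise, and together with $\nu'\le\nu_x$ this forces $\nu'=\nu_x$, proving $\nu_x$ minimal. The crux is precisely the equality $\nu_x(x_0)=r$ coming from $x\in F$: it makes the one-unit bump at $x$ propagate downward to the forbidden value $r-1$ at $x_0$. Collecting $\nu_0$ together with the $\nu_x$, $x\in F$, yields more than $\#F$ generators of degree $r$, and hence $\type\RRRRR_K(H)>\#F$.
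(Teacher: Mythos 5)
Your proof is correct and follows essentially the same strategy as the paper: both arguments exhibit $\#F+1$ explicit, pairwise distinct elements of $\TTTTT(P)$ taking the value $r$ at $x_0$ (hence minimal, hence generators of the canonical module of degree $r$), obtained by perturbing the base element $z\mapsto\rank[z,\infty]$ at elements of $F$. The only differences are cosmetic: the paper's family is cumulative along a linear extension $f_1<f_2<\cdots$ of $F$ with perturbed values $r-\rank[x_0,f_j]$, whereas you bump one element of $F$ at a time by the minimal amount and propagate it down, and you verify minimality directly rather than invoking the observation (the remark after Lemma \ref{lem:min t suf}) that $\nu(x_0)=r$ already forces $\nu$ to be minimal in $\TTTTT(P)$.
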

\begin{proof}
Set $F=\{f_1, f_2, \ldots, f_u\}$ and 
$i<j$ if $f_i<f_j$.
For $t$ with $1\leq t\leq u+1$, set
$$
\nu_t(x)=
\left\{
\begin{array}{ll}
\rank[x,\infty]&\quad\mbox{if $x\not\in F$ or $x\in F$ and $x=f_j$ with $j\geq t$,}\\
r-\rank[x_0,x]&\quad\mbox{otherwise.}
\end{array}
\right.
$$
Then it is easily verified that $\nu_t$ is an  element of $\TTTTT(P)$.
Further, since $\nu_t(x_0)=r$, we see that $\nu_t$ is a minimal element of $\TTTTT(P)$.

Since for any $t$, $t'$ with $t<t'$,
$\nu_t(f_t)<\nu_{t'}(f_t)$,
we see that $\nu_t\neq\nu_{t'}$.
Therefore, we see that there are at least $u+1$ minimal elements $\nu$ of 
$\TTTTT(P)$ such that $\nu(x_0)=r$.
\end{proof}

Now we state the main theorem of this section.

\begin{thm}
\mylabel{thm:level type 2}
$\RRRRR_K(H)$ is level and $\type \RRRRR_K(H)=2$ 
if and only if there exists $z\in P$ with the following conditions. 
\begin{enumerate}
\item
\mylabel{item:rank sum}
$\rank[x_0,z]+\rank[z,\infty]=r-1$.
\item
\mylabel{item:setminus pure}
$P^+\setminus \{z\}$ is pure of rank $r$.
\item
\mylabel{item:int pure}
$[x_0,z]$ and $[z,\infty]$ are pure.
\end{enumerate}
\end{thm}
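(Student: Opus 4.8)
The plan is to prove both implications, exploiting the fact that, once levelness is known, Corollary \ref{cor:cri level} together with the remark following Lemma \ref{lem:min t suf} identifies $\type\RRRRR_K(H)$ with the number of $\nu\in\TTTTT(P)$ satisfying $\nu(x_0)=r$. For any such $\nu$ one has $\rank[x,\infty]\le\nu(x)\le r-\rank[x_0,x]$ for every $x$, so $\nu$ is \emph{forced} to equal $\rank[x,\infty]$ on $P^+\setminus F$ and is free only on $F$, subject to the covering constraints at each point of $F$. Counting these $\nu$ is what will drive the whole argument.

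For the forward implication, assume $\RRRRR_K(H)$ is level with $\type\RRRRR_K(H)=2$. By Lemma \ref{lem:number float} we get $\#F<2$, and $F=\emptyset$ is impossible: Proposition \ref{prop:float del} would make $P$ pure, hence $\RRRRR_K(H)$ \gor\ of type $1$ by Theorem \ref{thm:gor case}. Thus $F=\{z\}$, and Proposition \ref{prop:float del} is exactly condition \eqref{item:setminus pure}. Write $h=\rank[x_0,\cdot]$ on the pure poset $P^+\setminus\{z\}$; by \eqref{eq:-f rank} one has $\rank[a,b]=h(b)-h(a)$ for $a\le b$ in $P^+\setminus\{z\}$. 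By the reduction above, the number of $\nu\in\TTTTT(P)$ with $\nu(x_0)=r$ equals $\min_{w\covered z}\rank[w,\infty]-\rank[z,\infty]$; setting this equal to $2$ fixes $\max_{w\covered z}h(w)$, and through $\rank[x_0,z]=1+\max_{w\covered z}h(w)$ this yields condition \eqref{item:rank sum}. Finally I extract \eqref{item:int pure} by feeding the one-step sequences $(v,z)$ (for each upper cover $v$ of $z$) and $(z,w)$ (for each lower cover $w$), which satisfy \condn, into Theorem \ref{thm:level main}: the inequalities $r(v,z)\le r$ and $r(z,w)\le r$ force every lower cover of $z$ to sit at height $\rank[x_0,z]-1$ and every upper cover at height $\rank[x_0,z]+2$. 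Since intervals at the bounds of a pure poset are pure, each $[x_0,w]$ and each $[v,\infty]$ is pure, and the uniform cover heights then make $[x_0,z]$ and $[z,\infty]$ pure.

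For the converse, assume $z$ satisfies \eqref{item:rank sum}--\eqref{item:int pure}. First $F=\{z\}$: clearly $z\in F$, while for $x\ne z$ purity of $P^+\setminus\{z\}$ gives $\rank[x_0,x]+\rank[x,\infty]\ge r$ in $P^+$, and the reverse inequality is automatic, so $x\notin F$. Next I compute the ranks that occur: purity of $[x_0,z]$, $[z,\infty]$ and of their bound-subintervals yields $\rank[a,z]=\rank[x_0,z]-h(a)$ and $\rank[z,b]=\rank[z,\infty]-(r-h(b))$, and with \eqref{item:rank sum} this shows a chain through $z$ is exactly one step shorter than the corresponding chain avoiding $z$; hence $\rank[a,b]=h(b)-h(a)$ for all $a\le b$ with $a,b\ne z$. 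Substituting these values into $r(y_1,x_1,\dots,y_t,x_t)$ and telescoping shows the sum collapses to $r$ regardless of whether $z$ occurs as a peak $y_i$, as a valley $x_i$, or not at all, because the two summands flanking a valley $x_i=z$ recombine. Thus $\rmax=r$ and $\RRRRR_K(H)$ is level by Theorem \ref{thm:level main}. Re-running the count of $\nu\in\TTTTT(P)$ with $\nu(x_0)=r$ now leaves exactly the two choices $\nu(z)\in\{\rank[z,\infty],\,\rank[z,\infty]+1\}$, so $\type\RRRRR_K(H)=2$.

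The main obstacle is condition \eqref{item:int pure}. Purity of $P^+\setminus\{z\}$ says nothing directly about intervals that \emph{end} or \emph{begin} at the deleted point $z$, so the purity of $[x_0,z]$ and $[z,\infty]$ must be wrung out separately: the type-$2$ count controls only the lower covers of $z$, and one genuinely needs the two one-step sequences $(v,z)$ and $(z,w)$ together with levelness to pin the upper covers as well. A secondary subtlety is the bookkeeping in the converse, where one must verify that every position $z$ can occupy in a \scn---peak, valley, or absent---still gives $r(\cdots)=r$; the valley case is the delicate one, since there the two adjacent summands must be combined before the telescoping goes through.
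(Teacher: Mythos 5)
Your argument is correct and its skeleton is the paper's: in the forward direction you pin down $F=\{z\}$ via Lemma \ref{lem:number float} and Proposition \ref{prop:float del}, and you extract condition \ref{item:int pure} from the one\nobreakdash-step \sscn\ at the covers of $z$ together with Theorem \ref{thm:level main}, exactly as the paper does; in the converse you derive the same rank identities from \ref{item:setminus pure} and \ref{item:int pure} and then count minimal elements of $\TTTTT(P)$. You deviate in two local ways, both sound. First, for condition \ref{item:rank sum} you count the admissible values of $\nu(z)$ (there are $r-\rank[x_0,z]-\rank[z,\infty]+1$ of them) and set the count equal to $2$, where the paper instead exhibits the three explicit minimal elements $\mu_0,\mu_1,\mu_2$ when $\rank[x_0,z]+\rank[z,\infty]\leq r-2$; these are the same computation in different clothing. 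Second, for levelness in the converse you evaluate $r(y_1,x_1,\ldots,y_t,x_t)$ on \emph{every} \scn\ by telescoping (recombining the two summands flanking a valley $x_i=z$), concluding the value is always $r$, whereas the paper reduces to an \irseq\ and only inspects its first step; your route is more computational but bypasses the irredundant\nobreakdash-sequence machinery entirely and in fact proves slightly more than $\rmax=r$. The only points you gloss over---that a lower cover $w=x_0$ of $z$ (for which the sequence $z,w$ violates \condn) can only occur when $[x_0,z]$ is trivially pure, and that both values $\nu(z)=\rank[z,\infty]$ and $\rank[z,\infty]+1$ are actually realized in the converse---are easy to fill and are treated no more explicitly in the paper.
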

\begin{nnremark}\rm
As Example \ref{ex:float del pure} shows, \ref{item:int pure} of 
Theorem \ref{thm:level type 2} does not follow from \ref{item:rank sum}
and \ref{item:setminus pure}.
\end{nnremark}
%
%
{\bf Proof of Theorem \ref{thm:level type 2}}\ 
We first assume that $\RRRRR_K(H)$ is level and $\type\RRRRR_K(H)=2$
and prove that
there exists $z\in P$ with 
\ref{item:rank sum},
\ref{item:setminus pure} and
\ref{item:int pure}.
Set 
$F=\{x\in P\mid\rank[x_0,x]+\rank[x,\infty]<r\}$.
If $F=\emptyset$ then, by Proposition \ref{prop:float del}, we see that 
$\RRRRR_K(H)$ is \gor, contradicting the assumption.
Therefore, $F\neq\emptyset$.
If $\#F\geq 2$, then by Lemma \ref{lem:number float}, we see that 
$\type \RRRRR_K(H)>2$, again contradicting the assumption.
Thus, $\#F=1$.

Set $F=\{z\}$.
We show that $z$ satisfies \ref{item:rank sum}, \ref{item:setminus pure} and 
\ref{item:int pure}.
Suppose that 
$$
\rank[x_0,z]+\rank[z,\infty]\leq r-2.
$$
Then if we set
$$
\mu_t(x)=
\left\{
\begin{array}{ll}
\rank[x,\infty]&\quad\mbox{if $x\neq z$,}\\
\rank[x,\infty]+t&\quad\mbox{if $x=z$,}
\end{array}
\right.
$$
then $\mu_t$ is a minimal element of $\TTTTT(P)$ for $0\leq t\leq 2$.
Thus, $\type\RRRRR_K(H)\geq 3$, contradicting the assumption.

Therefore,
$$
\rank[x_0,z]+\rank[z,\infty]=r-1,
$$
i.e., we see \ref{item:rank sum}.
Further, we see by Proposition \ref{prop:float del}, that $P^+\setminus\{z\}$ is
pure of rank $r$.
Thus, we see \ref{item:setminus pure}.

Now let $y$ be an arbitrary element of $[z,\infty]$ such that
$z\covered y$.
We shall show that $[y,\infty]$ is pure and $\rank[y,\infty]=\rank[z,\infty]-1$.
The case where $y=\infty$ is clear.
Suppose $y\neq\infty$.
Then $y$, $z$ is a \scn.
Since $\RRRRR_K(H)$ is level, we see by Theorem \ref{thm:level main} that
$
\rank[x_0,y]-1+\rank[z,\infty]=r(y,z)\leq r
$.
Since $\rank[x_0,y]=r-\rank[y,\infty]$, we see that
$
\rank[z,\infty]-\rank[y,\infty]\leq 1$.
Therefore, 
$\rank[y,\infty]=\rank[z,\infty]-1$.
Further,  $[y,\infty]$ is pure by Proposition \ref{prop:float del}.

Since $y$ is an arbitrary element of 
$[z,\infty]$ with $z\covered y$, we see 
that 
$[z,\infty]$ is pure.
We see that $[x_0,z]$ is pure by the same way.
Thus, we see \ref{item:int pure}.

Next we assume that
there exists $z\in P$ with 
\ref{item:rank sum},
\ref{item:setminus pure} and
\ref{item:int pure}
and prove that $\RRRRR_K(H)$ is level and $\type\RRRRR_K(H)=2$.

We first note that 
it follows from \ref{item:setminus pure} that
for any $w_1$, $w_2\in P^+\setminus\{z\}$,
$\rank[w_1,w_2]_{P^+\setminus\{z\}}=\rank[w_1,w_2]_{P^+}$.
In particular, 
$\rank[x,y]=
\rank[x_0,y]-\rank[x_0,x]$
for any $x$, $y\in P^+\setminus\{z\}$ with $x<y$. 
We also see 
that if $x<z$, then
$
\rank[x,z]=\rank[x_0,z]-\rank[x_0,x]$
and if $y>z$, then
$
\rank[z,y]=\rank[z,\infty]-\rank[y,\infty]
$,
since $[x_0,z]$ and $[z,\infty]$ are pure.

Now let 
$y_1$, $x_1$, \ldots, $y_t$, $x_t$ be an \irseq\ and set $y_{t+1}=\infty$.
We shall show that $t=0$.
Assume the contrary.
Then, since $y_1$, $x_1$, \ldots, $y_t$, $x_t$ is an \irseq,
we see that $r(y_2,x_2, \ldots, y_t,x_t)<r(y_1, x_1,\ldots, y_t,x_t)$,
i.e., 
\begin{equation}
\mylabel{eq:irred first}
\rank[x_0,y_2]<\rank[x_0,y_1]-\rank[x_1,y_1]+\rank[x_1,y_2].
\end{equation}

First consider the case where $x_1\neq z$.
Since $\rank[x_1,y_i]=\rank[x_0,y_i]-\rank[x_0,x_1]$ for $i=1$, $2$,
we see that the right hand side of \refeq{eq:irred first} is equal to
$\rank[x_0,y_2]$.
This is a contradiction.

Next consider the case where $x_1=z$.
Since $\rank[x_1,y_i]=\rank[x_1,\infty]-\rank[y_i,\infty]$ for $i=1$, $2$
we see that the right hand side of \refeq{eq:irred first} is equal to
$$
\rank[x_0,y_1]+\rank[y_1,\infty]-\rank[y_2,\infty].
$$
Since $y_i\neq z$ for $i=1,2$, we see that
$\rank[y_i,\infty]=r-\rank[x_0,y_i]$ for $i=1,2$.
Therefore, the right hand side of \refeq{eq:irred first} is equal to
$\rank[x_0,y_2]$.
This is also a contradiction.
Thus, we see that $t=0$ and $\RRRRR_K(H)$ is level.

Let $\nu$ be an arbitrary minimal element of $\TTTTT(P)$.
Since $\RRRRR_K(H)$ is level, we see that $\nu(x_0)=r$.
Thus, we see that $\nu(x)\geq\rank[x,\infty]$ and
$r-\nu(x)=\nu(x_0)-\nu(x)\geq\rank[x_0,x]$
i.e.,
$\rank[x,\infty]\leq\nu(x)\leq r-\rank[x_0,x]$
for any $x\in P$.
In particular, we see by \ref{item:setminus pure} that
$\nu(x)=\rank[x,\infty]$ for any $x\in P^+\setminus\{z\}$.
We also see by \ref {item:rank sum} that
$$
\rank[z,\infty]\leq\nu(z)\leq r-\rank[x_0,z]=\rank[z,\infty]+1.
$$
Thus $\type\RRRRR_K(H)\leq 2$.
Further, we see $\type \RRRRR_K(H)\geq 2$ by Lemma \ref{lem:number float}.
\qed


\section{Characterization of non-level type 2 Hibi rings}
\mylabel{sec:non level type 2}

In this final section, we give a characterization of 
a Hibi ring to be 
non-level and of type 2.


\begin{thm}
\mylabel{thm:non level type 2}
$\RRRRR_K(H)$ is non-level and $\type\RRRRR_K(H)=2$ if and only if
there exist $x$, $y\in P\setminus\{x_0\}$ with the following conditions.
\begin{enumerate}
\item
\mylabel{item:cover}
$x\covered y$.
\item
\mylabel{item:r+2}
$\rank[x_0,y]+\rank[x,\infty]=r+2$.
\item
\mylabel{item:union}
$P^+=[x_0,y]\cup[x,\infty]$.
\item
\mylabel{item:rank sum 2}
$\rank[x_0,z_1]+\rank[z_1,z_2]+\rank[z_2,\infty]=r$ for any $z_1$, $z_2\in P^+$
with $z_1\leq z_2$ and $(z_1,z_2)\neq (x,y)$.
\end{enumerate}
\end{thm}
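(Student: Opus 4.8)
The plan is to prove both implications through one reduction. By Theorem~\ref{thm:level main} non-levelness means $\rmax>r$, while by Theorem~\ref{prop:gen deg} the degrees of the generators of the canonical module fill the whole interval $[r,\rmax]$; since $\type\RRRRR_K(H)$ is the number of minimal elements of $\TTTTT(P)$, having exactly two generators forces $\rmax=r+1$ with precisely one generator in each of the degrees $r$ and $r+1$. Applying Lemma~\ref{lem:number float} to the unique degree-$r$ generator gives $\#F<1$, hence $F=\emptyset$, i.e.\ $\rank[x_0,w]+\rank[w,\infty]=r$ for every $w\in P^+$. The whole argument is then organized around the deficiency $e(u,v)\define r-(\rank[x_0,u]+\rank[u,v]+\rank[v,\infty])$ of a pair $u\le v$: once $F=\emptyset$ one has $e(u,v)=\rank[u,\infty]-\rank[v,\infty]-\rank[u,v]\in\{0,1\}$, the upper bound coming from $r(v,u)=2r-(r-e(u,v))\le\rmax=r+1$, and a telescoping computation gives the closed formula
\begin{equation*}
r(y_1,x_1,\ldots,y_t,x_t)=r+\sum_{i=1}^t e(x_i,y_i)-\sum_{i=1}^t e(x_{i-1},y_i),\qquad e(x_0,\cdot)=0,
\end{equation*}
valid for every \scn. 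This formula is the main engine.

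For the ``if'' direction I would first note that $z_1=z_2=w$ in \ref{item:rank sum 2} yields $F=\emptyset$, and that \ref{item:cover}, \ref{item:r+2}, \ref{item:rank sum 2} together say exactly that $e$ vanishes off the single pair $(x,y)$, where $e(x,y)=1$. The formula then gives $r(\sigma)\le r+1$ for every sequence, with equality realized by $(y,x)$, so $\rmax=r+1$ and $\RRRRR_K(H)$ is non-level. For the count, any minimal $\nu$ has $r\le\nu(x_0)\le r+1$ (Corollary~\ref{cor:nu x0 range}) and satisfies $\rank[w,\infty]\le\nu(w)\le\nu(x_0)-\rank[x_0,w]$; with $F=\emptyset$, degree $r$ forces $\nu=\nu_0$ (the element $w\mapsto\rank[w,\infty]$), and in degree $r+1$ one gets $\nu(w)\in\{\rank[w,\infty],\rank[w,\infty]+1\}$. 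Writing $S=\{w:\nu(w)=\rank[w,\infty]+1\}$, Lemma~\ref{lem:use ideal red} shows $S$ is not a poset ideal, so there is a cover $w\covered w'$ with $w\notin S$, $w'\in S$; then $\nu(w)>\nu(w')$ forces $\rank[w,\infty]\ge\rank[w',\infty]+2$, which by \ref{item:rank sum 2} occurs only at $(x,y)$, giving $x\notin S$, $y\in S$. Using \ref{item:union} to place each $w$ in $[x_0,y]$ or $[x,\infty]$ and comparing $\nu$ with $\nu(x)=\rank[x,\infty]$ and $\nu(y)=\rank[y,\infty]+1$ through \ref{item:rank sum 2}, I would conclude $S=[x_0,y]\setminus\{x\}$, so the degree-$(r+1)$ minimal element is unique and $\type\RRRRR_K(H)=2$.

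For the ``only if'' direction I would start from the unique degree-$(r+1)$ minimal element $\nu_1$, whose support $S$ is non-empty and (Lemma~\ref{lem:use ideal red}) not a poset ideal, and pick a cover $x\covered y$ with $x\notin S$, $y\in S$; this is \ref{item:cover}. Since $\nu_1\in\TTTTT(P)$ gives $\rank[x,\infty]\ge\rank[y,\infty]+2$, the bound $e\le1$ forces $e(x,y)=1$, which is exactly \ref{item:r+2}. As $r(y,x)=r+e(x,y)=\rmax$, Lemma~\ref{lem:nu up down min} makes $\nudown_{(y,x)}$ and $\nuup_{(y,x)}$ minimal of degree $r+1$, hence both equal to $\nu_1$. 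Evaluating this common value at a hypothetical $w$ with $w\not\le y$ and $x\not\le w$ gives $\rank[w,\infty]$ from the $\nudown$ side and $\rank[w,\infty]+1$ from the $\nuup$ side, a contradiction; this is precisely \ref{item:union}. Comparing $\nudown_{(y,x)}$ and $\nuup_{(y,x)}$ on $[x_0,y]\setminus\{x,y\}$ and on $[x,\infty]\setminus\{x,y\}$ then forces $e(w,y)=0$ and $e(x,w)=0$ there, and pins down $S=[x_0,y]\setminus\{x\}$.

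The hard part is \ref{item:rank sum 2}, i.e.\ showing $(x,y)$ is the only deficient pair. My plan is a rigidity argument driven by the uniqueness of $\nu_1$: for any pair with $e(a,b)=1$ one has $a\neq x_0$ (as $e(x_0,\cdot)=0$) and $r(b,a)=r+1=\rmax$, so $\nudown_{(b,a)}$ is a degree-$(r+1)$ minimal element, hence equals $\nu_1$. Its value at $b$ is $\max\{\rank[b,\infty],\rank[a,\infty]-1\}=\rank[a,b]+\rank[b,\infty]$, which exceeds $\rank[b,\infty]+1$ whenever $\rank[a,b]\ge2$; since $\nu_1$ takes values in $\{\rank[\cdot,\infty],\rank[\cdot,\infty]+1\}$, every deficient pair must be a cover. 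A deficient cover $(a,b)$ satisfies $\nudown_{(b,a)}(a)=\rank[a,\infty]$ and $\nudown_{(b,a)}(b)=\rank[b,\infty]+1$, so $a\notin S$ and $b\in S=[x_0,y]\setminus\{x\}$; the only such cover is $(x,y)$. This establishes \ref{item:rank sum 2} and finishes the proof. I expect the support bookkeeping and the verification of the telescoping formula to be the most delicate routine steps, but the genuine obstacle is exploiting the uniqueness of $\nu_1$ to annihilate spurious deficient pairs.
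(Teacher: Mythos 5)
Your overall architecture is sound and in places genuinely different from the paper's: the paper extracts the pair $(x,y)$ from an \irseq\ and proves $x\covered y$ via the irredundancy condition, whereas you extract it from the support $S$ of the unique degree-$(r+1)$ generator; and your telescoping identity $r(y_1,x_1,\ldots,y_t,x_t)=r+\sum_i e(x_i,y_i)-\sum_i e(x_{i-1},y_i)$ (valid once $F=\emptyset$) is a clean substitute for the paper's case-by-case estimates. The reduction to $\rmax=r+1$, $F=\emptyset$, the verification of \ref{item:cover}--\ref{item:union}, and the uniqueness counts in the ``if'' direction all check out. There is, however, one step in your treatment of \ref{item:rank sum 2} that fails as written.

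You claim that for a deficient pair $a<b$ with $e(a,b)=1$ the value $\nudown_{(b,a)}(b)$ equals $\max\{\rank[b,\infty],\rank[a,\infty]-1\}=\rank[a,b]+\rank[b,\infty]$, which would exceed the admissible bound $\rank[b,\infty]+1$ whenever $\rank[a,b]\geq 2$, forcing every deficient pair to be a cover. The computation is wrong: by Definition \ref{def:nuup down}, $\mudown_{(b,a)}(b)=-\rank[a,b]+\rank[a,\infty]$, not $\rank[a,\infty]-1$, and since $e(a,b)=1$ this equals $\rank[b,\infty]+1$ regardless of $\rank[a,b]$. Hence $\nudown_{(b,a)}(b)=\rank[b,\infty]+1$ always, no contradiction arises, and ``deficient implies cover'' is not established this way (evaluating $\nudown_{(b,a)}$ at $a$, at $x_0$, or at points of $(a,b)$ likewise yields no contradiction, so the claim cannot be rescued by looking elsewhere). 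Fortunately the detour is unnecessary: your closing computation --- $\nudown_{(b,a)}=\nu_1$ by uniqueness, $\nudown_{(b,a)}(a)=\rank[a,\infty]$ and $\nudown_{(b,a)}(b)=\rank[b,\infty]+1$, hence $a\notin S$ and $b\in S=[x_0,y]\setminus\{x\}$ --- uses nothing about $\rank[a,b]$. From $a<b\leq y$ and $a\notin S$ you get $a=x$, and then $b\in(x,y]=\{y\}$ because $x\covered y$, so $(a,b)=(x,y)$ directly. Deleting the cover step and running the final argument for arbitrary deficient pairs closes the gap; this is in substance the paper's own argument, which compares $\nudown_{(z_2,z_1)}$ with $\nudown_{(y,x)}$ at $y$ and at $z_2$ without ever needing to know $S$ or the cover property in advance.
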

\begin{proof}
Set $F=\{z\in P\mid \rank[x_0,z]+\rank[z,\infty]<r\}$.

First we assume that $\RRRRR_K(H)$ is non-level and $\type\RRRRR_K(H)=2$
and prove that there exist $x$, $y\in P$ with conditions \ref{item:cover} to \ref{item:rank sum 2}.
Since $\RRRRR_K(H)$ is not level, we see by Theorem \ref{thm:level main} that $\rmax>r$.
Further, since $\type\RRRRR_K(H)\geq\rmax-r+1$ by Theorem \ref{prop:gen deg}, we see 
that $\rmax=r+1$ and the generating system of the canonical module of $\RRRRR_K(H)$ 
consists of $2$ elements: one has degree $r$ and the other one has degree $r+1$.
Thus, by Lemma \ref{lem:number float}, we see that $F=\emptyset$.

Let $y_1$, $x_1$, \ldots, $y_t$, $x_t$ be an \irseq\
and set $y_{t+1}=\infty$.
First we claim that $t=1$.
Since $\rmax>r$, we see that $t\geq 1$.
Further, since 
$y_1$, $x_1$, \ldots, $y_t$, $x_t$ is an \shseq,
we see that
$r(y_2,x_2, \ldots, y_t,x_t)<r(y_1,x_1,\ldots,y_t,x_t)$,
i.e.,
$$
\rank[x_0,y_2]<\rank[x_0,y_1]-\rank[x_1,y_1]+\rank[x_1,y_2].
$$
Moreover, since $F=\emptyset$, we see that
\begin{eqnarray*}
r&=&\rank[x_0,y_2]+\rank[y_2,\infty]\\
&<&\rank[x_0,y_1]-\rank[x_1,y_1]+\rank[x_1,y_2]+\rank[y_2,\infty]\\
&\leq&\rank[x_0,y_1]-\rank[x_1,y_1]+\rank[x_1,\infty]\\
&=&r(y_1,x_1).
\end{eqnarray*}
Since $\rmax=r+1$ and $y_1$, $x_1$, \ldots, $y_t$, $x_t$ is an \shseq,
we see that $r(y_1,x_1)=r+1$ and $t=1$.

Set $x=x_1$ and $y=y_1$.
We show that these $x$ and $y$ satisfy \ref{item:cover} to \ref{item:rank sum 2}.
We first show that \ref{item:cover}.
Assume the contrary.
Then 
there is $w\in P$ such that $x<w<y$ and
$
\rank[x,y]=\rank[x,w]+\rank[w,y]$.
Since $y,x$ is an \irseq, we see that
$$
\rank[x_0,w]-\rank[x,w]\leq\rank[x_0,y]-\rank[x,y]-1
$$
and
$$
-\rank[w,y]+\rank[w,\infty]\leq-\rank[x,y]+\rank[x,\infty]-1.
$$
Therefore,
\begin{eqnarray*}
&&\rank[x_0,w]-\rank[x,w]-\rank[w,y]+\rank[w,\infty]\\
&\leq&\rank[x_0,y]-2\rank[x,y]+\rank[x,\infty]-2\\
&=&r(y,x)-\rank[x,y]-2.
\end{eqnarray*}
On the other hand,
since
$$
\rank[x,w]+\rank[w,y]=\rank[x,y]
\quad\mbox{and}\quad
\rank[x_0,w]+\rank[w,\infty]=r,
$$
we see
$r\leq r(y,x)-2$.
This contradicts to the fact that $r(y,x)=r+1$.
Therefore, we see \ref{item:cover}.
Moreover, since
\begin{eqnarray*}
&&\rank[x_0,y]+\rank[x,\infty]-1\\
&=&\rank[x_0,y]-\rank[x,y]+\rank[x,\infty]\\
&=&r(y,x)\\
&=&r+1,
\end{eqnarray*}
we see \ref{item:r+2}.

Next we prove \ref{item:union}.
Assume the contrary and let $z\in P^+\setminus([x_0,y]\cup[x,\infty])$.
Since $z\not\leq y$, we see by the definition of $\nudown_{(y,x)}$ 
(see Definition \ref{def:nuup down}) that
$\nudown_{(y,x)}(z)=\rank[z,\infty]$.
On the other hand, since $z\not\geq x$, we see by the definition of $\nuup_{(y,x)}$
that $\nuup_{(y,x)}(z)=r+1-\rank[x_0,z]$.
Since the minimal generating system of the canonical module of $\RRRRR_K(H)$  
contains only one element with degree $r+1$
and $\nudown_{(y,x)}$ and $\nuup_{(y,x)}$ are minimal elements of $\TTTTT(P)$ with
degree $r+1$ by Lemma \ref{lem:nu up down min},
we see that
$\nudown_{(y,x)}=\nuup_{(y,x)}$.
Therefore,
$$
\rank[z,\infty]=\nudown_{(y,x)}(z)=\nuup_{(y,x)}(z)=r+1-\rank[x_0,z].
$$
This contradicts to the fact that $\rank P^+=r$.
Therefore, we see \ref{item:union}.

Finally, we prove \ref{item:rank sum 2}.
The case where $z_1=z_2$, $z_1=x_0$ or $z_2=\infty$ are clear since $F=\emptyset$.
Suppose that $x_0<z_1<z_2<\infty$.
Then $z_2$, $z_1$ is a \scn.
If $r(z_2,z_1)\leq r$, then
$$
\rank[x_0,z_2]-\rank[z_1,z_2]+\rank[z_1,\infty]\leq r.
$$
Since $F=\emptyset$, we see that
$$
2r-\rank[z_2,\infty]-\rank[z_1,z_2]-\rank[x_0,z_1]\leq r.
$$
Therefore,
$$
\rank[x_0,z_1]+\rank[z_1,z_2]+\rank[z_2,\infty]\geq r.
$$
Since the converse inequality holds in general, we see the result.

Now assume that $r(z_2,z_1)>r$.
Since $\rmax=r+1$, we see that $r(z_2,z_1)=\rmax=r+1$.
Moreover, since the generating system of the canonical module 
contains exactly one element with degree $r+1$,
we see that
$$
\nudown_{(y,x)}=\nuup_{(y,x)}=\nudown_{(z_2,z_1)}=\nuup_{(z_2,z_1)}
$$
by Lemma \ref{lem:nu up down min}.
Since
$
\nudown_{(z_2,z_1)}(y)=
\nudown_{(y,x)}(y)>
\rank[y,\infty]
$,
we see that $y\leq z_2$.
On the other hand, since
$\nudown_{(y,x)}(z_2)=\nudown_{(z_2,z_1)}(z_2)
>\rank[z_2,\infty]$,
we see that
$z_2\leq y$.
Therefore, we see that $z_2=y$.
We also see that $z_1=x$ by the same way.
This proves \ref{item:rank sum 2}.

Next we assume that 
there exist $x$, $y\in P$ with conditions \ref{item:cover} to \ref{item:rank sum 2}
and prove that $\RRRRR_K(H)$ is non-level and $\type\RRRRR_K(H)=2$.
First we see by \ref{item:rank sum 2} that 
$F=\emptyset$.
Further, we see 
that $y$, $x$ is a \scn\ and 
by \ref{item:cover} and \ref{item:r+2} 
that
$$
r(y,x)=\rank[x_0,y]-\rank[x,y]+\rank[x,\infty]=r+1.
$$
Thus, we see that $\rmax>r$.
In particular, $\RRRRR_K(H)$ is not level by Theorem \ref{thm:level main}.

Let $y_1$, $x_1$, \ldots, $y_t$, $x_t$ be an \shseq.
Since $\rmax>r$, we see that $t\geq 1$.
First consider the case where there is no $i$ such that $(x_i,y_i)=(x,y)$.
Then $r(y_1,x_1,\ldots, y_t,x_t)\leq r$ by \ref{item:cover}, \ref{item:r+2} and  
\ref{item:rank sum 2},
contradicting the fact that $y_1$, $x_1$, \ldots, $y_t$, $x_t$ is an
\irseq.
Thus, there exists $i$ such that $(x_i,y_i)=(x,y)$.
Then by \ref{item:rank sum 2}, we see that
$r(y_1,x_1,\ldots, y_t,x_t)=r(y_i,x_i)=r(y,x)=r+1$.
Therefore, we see that $y$, $x$ is the unique \irseq\ and $\rmax=r(y,x)=r+1$.

Let $\nu$ be a minimal element of $\TTTTT(P)$ with $\nu(x_0)=r$.
Since $\nu(z)\geq\rank[z,\infty]$ and $r-\nu(z)=\nu(x_0)-\nu(z)\geq\rank[x_0,z]$,
we see that $\nu(z)=\rank[z,\infty]$ for any $z\in P^+$, since $F=\emptyset$.
Therefore, in the generating system of the canonical module of $\RRRRR_K(H)$,
there exists exactly one element with degree $r$.

Let $\nu$ be a minimal element of $\TTTTT(P)$ with 
$
\nu(x_0)=r+1
$.
It is enough to
show that $\nu=\nudown_{(y,x)}$.
As in the proof of Lemma \ref{lem:min t nec}, set
$U_1\define\{w\in P^+\mid\rank[x_0,w]_{P^+}=\nu(x_0)-\nu(w)\}$,
$D_1\define\{z\in P^+\setminus U_1\mid\exists w\in U_1$ such that $w>z\}$,
$U_2\define\{w\in P^+\setminus(U_1\cup D_1)\mid\exists z\in D_1$ such that 
$z<w$ and $\rank[z,w]_{P^+}=\nu(z)-\nu(w)\}$,
$D_2\define\{z\in P^+\setminus (U_1\cup D_1\cup U_2)\mid\exists w\in U_2$ such that $w>z\}$
and so on.
Since $\nu(x_0)=r+1$, we see that $\infty\not\in U_1$.
On the other hand, since
$$
\infty\in U_1\cup D_1\cup U_2\cup D_2\cup\cdots,
$$
we see that $D_1\neq \emptyset$.

Let $z_1\in D_1$ and let $z_2\in U_1$ such that $z_2>z_1$.
Then 
\begin{eqnarray}
&&\rank[x_0,z_2]\nonumber\\
&=&\nu(x_0)-\nu(z_2)\nonumber\\
&=&\nu(x_0)-\nu(z_1)+\nu(z_1)-\nu(z_2)\nonumber\\
&\geq&\rank[x_0,z_1]+1+\rank[z_1,z_2].
\mylabel{eq:z1}
\end{eqnarray}
Therefore, we see by \ref{item:rank sum 2} that $(z_1,z_2)=(x,y)$.
Further, by \ref{item:cover}, \ref{item:r+2} and 
the fact that $F=\emptyset$,
we see that equality holds for \refeq{eq:z1}.
Therefore, we see that
\begin{equation}
\nu(x_0)-\nu(x)=\rank[x_0,x]+1
\mylabel{eq:nu x}
\end{equation}
and
\begin{equation}
\nu(x)-\nu(y)=1.
\mylabel{eq:nu x nu y}
\end{equation}

Let $z$ be an arbitrary element of $P$.
We shall show that $\nu(z)=\nudown_{(y,x)}(z)$.

First consider the case where $z=x$.
Since 
$\nu(x)=\rank[x,\infty]$ by \refeq{eq:nu x} and the
facts that $F=\emptyset$ and $\nu(x_0)=r+1$, 
we see that $\nu(x)=\nudown_{(y,x)}(x)$ by Lemma \ref{lem:nu up down min}.

Next consider the case where $z=y$.
Since $\nu(x)=\rank[x,\infty]$, we see by \refeq{eq:nu x nu y} and
\ref{item:cover} that
$\nu(y)=\rank[x,\infty]-\rank[x,y]=\nudown_{(y,x)}(y)$
by Lemma \ref{lem:nu up down min}.

Now consider the case where $z\leq y$ and $z\neq x$.
Since $\rank[x_0,y]=\rank[x_0,z]+\rank[z,y]$ by \ref{item:rank sum 2},
we see that $z\in U_1$ and $\nu(z)-\nu(y)=\rank[z,y]$.
Since $\nu(x_0)=r+1$,
we see that
$\nu(z)=\nu(x_0)-\rank[x_0,z]>\rank[z,\infty]$.
Therefore,
we see by the definition of $\nudown_{(y,x)}$
that
$$
\nu(z)=\nu(y)+\rank[z,y]=\nudown_{(y,x)}(y)+\rank[z,y]=\nudown_{(y,x)}(z).
$$

Finally consider the case where $z\not\leq y$.
By \ref{item:union}, we see that $z>x$.
Further, we see by \ref{item:rank sum 2} that
$\rank[x,\infty]=\rank[x,z]+\rank[z,\infty]$.
Since $\nu(x)=\rank[x,\infty]$, we see that
$\nu(z)=\rank[z,\infty]=\nudown_{(y,x)}(z)$.
\end{proof}

\end{document}